 \newtheorem{remark}{Remark}
 \newtheorem{lemma}[remark]{Lemma}
 \newtheorem{theorem}[remark]{Theorem}
 \newtheorem{proposition}[remark]{Proposition}
 \newtheorem{corollary}[remark]{Corollary}
  \newtheorem{claim}[remark]{Claim}
\title{Closed formulae for the strong metric dimension of lexicographic product graphs}
\author{Dorota Kuziak$^{(1)}$, Ismael G. Yero$^{(2)}$ and Juan A. Rodr\'{\i}guez-Vel\'{a}zquez$^{(1)}$\\
$^{(1)}${\small Departament d'Enginyeria Inform\`atica i Matem\`atiques,}\\
{\small Universitat Rovira i Virgili,} {\small Av. Pa\"{\i}sos Catalans 26, 43007 Tarragona, Spain.}\\
{\small dorota.kuziak\@@urv.cat, juanalberto.rodriguez\@@urv.cat}\\
$^{(2)}${\small Departamento de Matem\'aticas, Escuela Polit\'ecnica Superior de Algeciras}\\
{\small Universidad de C\'adiz,} {\small Av. Ram\'on Puyol s/n, 11202 Algeciras, Spain.}\\
{\small ismael.gonzalez\@@uca.es}\\
}
\begin{document}
\maketitle

\begin{abstract}
Given a connected graph $G$, a vertex $w\in V(G)$ strongly resolves two vertices $u,v\in V(G)$ if there exists some shortest $u-w$ path containing $v$ or some shortest $v-w$ path containing $u$. A set $S$ of vertices is a strong metric generator for $G$ if every pair of vertices of $G$ is strongly resolved by some vertex of $S$. The smallest cardinality of a strong metric generator for $G$ is called the strong metric dimension of $G$. In this paper we obtain several relationships between the strong metric dimension of the lexicographic product of graphs and the strong metric dimension of its factor graphs.
\end{abstract}

{\it Keywords:} Strong metric dimension; strong metric basis; strong metric generator; lexicographic product graphs.

{\it AMS Subject Classification Numbers:}  05C12; 05C69; 05C76.

\section{Introduction}

A vertex $v$ of a connected graph $G$ is said to distinguish two vertices $x$ and $y$ of $G$ if $d_G(v,x)\ne d_G(v,y)$, \textit{i.e.}, the distance between $v$ and $x$ is different from the distance between $v$ and $y$. A set $S\subset V(G)$ is said to be a \emph{metric generator} for $G$ if any pair of vertices of $G$ is distinguished by some element of $S$. A minimum generator is called a \emph{metric basis}, and its cardinality the \emph{metric dimension} of $G$. The problem of uniquely recognizing the position of an intruder in a network was the principal motivation of introducing the concept of metric generators in graphs by Slater in \cite{Slater1975}, where metric generators were called \emph{locating sets}. An analogous concept was also introduced independently by Harary and Melter in \cite{Harary1976}, where the metric generators were called \emph{resolving sets}. Several applications and theoretical studies about metric generators have been presented and published. In this sense, according to the amount of literature concerning this topic and all its close variants, we restrict our references to those ones which are only citing papers that we really refer to in a non-superficial way.

Another invariant, more restricted than the metric dimension, was presented by Seb\H{o} and Tannier in \cite{Sebo2004}, and studied further in several articles. That is, a vertex $w\in V(G)$ \emph{strongly resolves} two vertices $u,v\in V(G)$ if $d_G(w,u)=d_G(w,v)+d_G(v,u)$ or $d_G(w,v)=d_G(w,u)+d_G(u,v)$, \emph{i.e.}, there exists some shortest $w-u$ path containing $v$ or some shortest $w-v$ path containing $u$. A set $S$ of vertices in a connected graph $G$ is a \emph{strong metric generator} for $G$ if every two vertices of $G$ are strongly resolved by some vertex of $S$. The smallest cardinality of a strong metric generator for $G$ is called the \emph{strong metric dimension} and is denoted by $\dim_s(G)$. A \emph{strong metric basis} of $G$ is a strong metric generator for $G$ of cardinality $\dim_s(G)$. The strong metric dimension of product graphs has been previously studied for the case of Cartesian product graphs and direct product graphs \cite{Rodriguez-Velazquez2013a}, strong product graphs \cite{Kuziak2013c}, corona product graphs and join graphs \cite{Kuziak2013} and rooted product graphs \cite{Kuziak2013b}. In this paper we study the strong metric dimension of lexicographic product graphs.

We begin by giving some basic concepts and notations. Let $G=(V,E)$ be a simple graph. For two adjacent vertices $u$ and $v$ of $G$ we use the notation  $u\sim v$ and, in this case, we say that $uv$ is an edge of $G$, \emph{i.e.}, $uv\in E$. The complement $G^c$ of $G$ has the same vertex set than $G$ and $uv\in E(G^c)$ if and only if $uv\notin E$. The diameter of $G$ is defined as $$D(G)=\max_{u,v\in V}\{d_G(u,v)\}.$$
If $G$ is not connected, then we will assume that the distance between any two vertices belonging to different components of $G$ is infinity and, thus, its diameter is $D(G)=\infty$. For a vertex $v\in V,$ the set $N_G(v)=\{u\in V:\; u\sim v\}$ is the open neighborhood of $v$ and the set $N_G[v] = N_G(v)\cup \{v\}$ is the closed neighborhood of $v$. Two vertices $x$, $y$ are called \emph{true twins} if $N_G[x] = N_G[y]$. In this sense, a vertex $x$ is a \emph{twin} if there exists $y\ne x$ such that they are true twins. We recall that a set $S$ is a \emph{clique} in $G$, if the subgraph induced by $S$ is isomorphic to a complete graph. The \emph{clique number} of a graph $G$, denoted by $\omega(G)$, is the number of vertices in a maximum clique in $G$. We refer to an $\omega(G)$-set in a graph $G$ as a clique of cardinality $\omega(G)$.

A set $S$ of vertices of $G$ is a \emph{vertex cover} of $G$ if every edge of $G$ is incident with at least one vertex of $S$. The \emph{vertex cover number} of $G$, denoted by $\alpha(G)$, is the smallest cardinality of a vertex cover of $G$. We refer to an $\alpha(G)$-set in a graph $G$ as a vertex cover set of cardinality $\alpha(G)$.

Recall that the largest cardinality of a set of vertices of $G$, no two of which are adjacent, is called the \emph{independence number} of $G$ and is denoted by $\beta(G)$. We refer to an $\beta(G)$-set in a graph $G$ as an independent set of cardinality $\beta(G)$. The following well-known result, due to Gallai, states the relationship between the independence number and the vertex cover number of a graph.

\begin{theorem}{\rm (Gallai's theorem)}\label{th gallai}
For any graph  $G$ of order $n$,
$$\alpha(G)+\beta(G) = n.$$
\end{theorem}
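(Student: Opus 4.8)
The plan is to establish the identity by exploiting the complementary correspondence between vertex covers and independent sets. The central observation I would prove first is the following equivalence: for any subset $S\subseteq V$, the set $S$ is a vertex cover of $G$ if and only if its complement $V\setminus S$ is an independent set. This is immediate from unravelling the definitions, since $S$ covers every edge of $G$ precisely when no edge has both endpoints outside $S$, which is exactly the statement that $V\setminus S$ induces no edge, i.e.\ that $V\setminus S$ is independent.

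Granting this equivalence, I would obtain the two inequalities $\alpha(G)+\beta(G)\le n$ and $\alpha(G)+\beta(G)\ge n$ by applying it in each direction. First, take a minimum vertex cover $S$ with $|S|=\alpha(G)$; then $V\setminus S$ is an independent set of cardinality $n-\alpha(G)$, whence $\beta(G)\ge n-\alpha(G)$. Conversely, take a maximum independent set $I$ with $|I|=\beta(G)$; then $V\setminus I$ is a vertex cover of cardinality $n-\beta(G)$, whence $\alpha(G)\le n-\beta(G)$, that is $\beta(G)\le n-\alpha(G)$. Combining the two gives $\beta(G)=n-\alpha(G)$, which is exactly the claimed identity $\alpha(G)+\beta(G)=n$.

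The argument is entirely self-contained and presents no genuine obstacle; the only point requiring any care is the verification of the complementation equivalence, which rests solely on the definitions of vertex cover and independent set. In particular, no appeal to connectivity or to any metric or distance notion is needed, so the reasoning applies verbatim to every graph of order $n$.
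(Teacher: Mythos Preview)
Your argument is correct and is exactly the standard proof of Gallai's theorem via the bijection $S\mapsto V\setminus S$ between vertex covers and independent sets. Note, however, that the paper does not actually prove this statement: it is recorded there as a classical result attributed to Gallai and is invoked without proof, so there is no ``paper's own proof'' to compare against.
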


A vertex $u$ of $G$ is \emph{maximally distant} from $v$ if for every $w\in N_G(u)$, $d_G(v,w)\le d_G(u,v)$. If $u$ is maximally distant from $v$ and $v$ is maximally distant from $u$, then we say that $u$ and $v$ are \emph{mutually maximally distant}. The {\em boundary} of $G=(V,E)$ is defined as 
$$\partial(G) = \{u\in V:\; \mbox{exists } v\in V\, \mbox{such that } u,v\mbox{ are mutually maximally distant}\}.$$
We use the notion of strong resolving graph introduced by Oellermann and Peters-Fransen in \cite{Oellermann2007}. The \emph{strong resolving graph}\footnote{In fact, according to \cite{Oellermann2007} the strong resolving graph $G'_{SR}$ of a graph $G$ has vertex set $V(G'_{SR})=V(G)$ and two vertices $u,v$ are adjacent in $G'_{SR}$ if and only if $u$ and $v$ are mutually maximally distant in $G$. So, the strong resolving graph defined here is a subgraph of the strong resolving graph defined in \cite{Oellermann2007} and can be obtained from the latter graph by deleting its isolated vertices.} of $G$ is a graph $G_{SR}$  with vertex set $V(G_{SR}) = \partial(G)$ where two vertices $u,v$ are adjacent in $G_{SR}$ if and only if $u$ and $v$ are mutually maximally distant in $G$.

If it is the case, for a non-connected graph $G$ we will use the assumption that any two vertices belonging to different components of $G$ are mutually maximally distant between them.

It was shown in  \cite{Oellermann2007}  that the problem of finding the strong metric dimension of a graph $G$ can be transformed into the problem of computing the vertex cover number of $G_{SR}$.

\begin{theorem}{\em \cite{Oellermann2007}}\label{th oellermann}
For any connected graph $G$,
$$\dim_s(G) = \alpha(G_{SR}).$$
\end{theorem}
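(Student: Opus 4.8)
The plan is to prove the two inequalities $\dim_s(G)\ge\alpha(G_{SR})$ and $\dim_s(G)\le\alpha(G_{SR})$ separately, the entire argument resting on controlling which vertices can strongly resolve a given pair.

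First I would establish the key local fact: if $u$ and $v$ are mutually maximally distant, then the only vertices that strongly resolve the pair $u,v$ are $u$ and $v$ themselves. Indeed, suppose $w\notin\{u,v\}$ strongly resolves $u,v$, say $d_G(w,u)=d_G(w,v)+d_G(v,u)$, so that $v$ lies on a shortest $w$-$u$ path. Since $w\ne v$, the vertex $v$ has a predecessor $v'$ on this path (on the $w$-side), and as a subpath of a shortest path gives $d_G(v',u)=d_G(v,u)+1$; but $v'$ is a neighbor of $v$, contradicting that $v$ is maximally distant from $u$. The symmetric case is analogous. From this, $\dim_s(G)\ge\alpha(G_{SR})$ follows immediately: given any strong metric generator $S$, every edge $uv$ of $G_{SR}$ is a mutually maximally distant pair, so the vertex of $S$ strongly resolving $u,v$ must be $u$ or $v$; hence $S\cap V(G_{SR})$ is a vertex cover of $G_{SR}$ and $|S|\ge\alpha(G_{SR})$.

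For the reverse inequality I would show that a minimum vertex cover $C$ of $G_{SR}$ is itself a strong metric generator of $G$. The crucial step is an \emph{extension lemma}: for every pair $u,v\in V(G)$ there exist mutually maximally distant vertices $\hat u,\hat v$ such that $u$ and $v$ both lie on a common shortest $\hat u$-$\hat v$ path. I would prove this by choosing, among all pairs $(a,b)$ for which $u$ and $v$ lie on some shortest $a$-$b$ path, one that maximizes $d_G(a,b)$ (such a pair exists because $(u,v)$ itself qualifies and $G$ is connected). If the maximizer $(\hat u,\hat v)$ were not mutually maximally distant, say $\hat u$ had a neighbor $\hat u'$ with $d_G(\hat u',\hat v)=d_G(\hat u,\hat v)+1$, then prepending the edge $\hat u'\hat u$ to the chosen shortest $\hat u$-$\hat v$ path yields a shortest $\hat u'$-$\hat v$ path still containing $u$ and $v$, contradicting the maximality of $d_G(\hat u,\hat v)$.

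Once the extension lemma is in hand, the reverse inequality is short. Ordering $\hat u,\hat v$ along their common shortest path, the vertex $v$ separates $\hat v$ from $u$, so $d_G(\hat v,u)=d_G(\hat v,v)+d_G(v,u)$ and $\hat v$ strongly resolves $u,v$ (and symmetrically so does $\hat u$). Since $\hat u\hat v$ is an edge of $G_{SR}$, the cover $C$ contains $\hat u$ or $\hat v$, either of which strongly resolves $u,v$; thus $C$ is a strong metric generator and $\dim_s(G)\le|C|=\alpha(G_{SR})$, which together with the first inequality gives the claimed equality. I expect the extension lemma to be the main obstacle, since it is the only place where the interplay between geodesics and the maximal-distance condition must be handled carefully; the distance-maximization trick is what keeps it clean, avoiding any iterative re-extension that would be awkward to terminate.
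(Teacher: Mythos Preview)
The paper does not supply its own proof of this theorem; it is quoted from Oellermann and Peters-Fransen \cite{Oellermann2007} and used as a tool. Your argument is correct and is, in fact, essentially the argument given in that reference: one shows that a mutually maximally distant pair can be strongly resolved only by one of its own endpoints (yielding $\dim_s(G)\ge\alpha(G_{SR})$), and conversely that every pair of vertices lies on a geodesic whose ends are mutually maximally distant, so a vertex cover of $G_{SR}$ is already a strong metric generator (yielding $\dim_s(G)\le\alpha(G_{SR})$). Your distance-maximization device for the extension lemma is exactly the clean way to avoid an awkward iterative extension, and the small step you flag---that a non-maximal end has a neighbor at strictly greater distance, so prepending one edge preserves geodesicity---is the right observation.
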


We will use the notation $K_n$,  $C_n$, $N_n$ and $P_n$ for complete graphs,  cycle graphs, empty graphs and path graphs, respectively. In this work, the remaining definitions will be given the first time that the concept appears in the text.

\section{The strong metric dimension of the lexicographic product of graphs}\label{sectlex}

The \textit{lexicographic product} of two graphs $G=(V_1,E_1)$ and $H=(V_2,E_2)$ is the graph $G\circ H$ with vertex set $V=V_1\times V_2$ and two vertices $(a,b),(c,d)\in V$  are adjacent in $G\circ H$ if and only if either $ac\in E_1$, or ($a=c$ and $bd\in E_2$).

Note that the lexicographic product of two graphs is not a commutative operation. Moreover, $G\circ H$ is a connected graph if and only if $G$ is connected. For more information on structure and properties of the lexicographic product of graphs we suggest \cite{Hammack2011}. Nevertheless, we would point out the following known results.

\begin{claim}{\rm \cite{Hammack2011}}\label{basictoolLexicographic} Let $G$ and $H$ be two non-trivial graphs such that $G$ is connected. Then the following assertions hold for any  $a,c\in V(G)$ and $b,d\in V(H)$ such  that $a\ne c$.
\begin{enumerate}[{\rm (i)}]
\item $N_{G\circ H}(a,b)=\left(\{a\}\times  N_H(b)\right)\cup \left( N_G(a)\times  V(H)\right)$.
\item  $d_{G\circ H}((a,b),(c,d)) = d_{G}(a,c)$
\item   $d_{G\circ H}((a,b),(a,d)) = \min \{d_{H}(b,d),2\}$.
\end{enumerate}
\end{claim}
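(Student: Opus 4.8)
The plan is to verify each of the three assertions directly from the definition of adjacency in $G\circ H$, since all three are essentially structural facts relating distances in the product to distances in the factors.

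For (i), I would simply unwind the definition: a vertex $(c,d)$ is adjacent to $(a,b)$ precisely when either $ac\in E_1$ (in which case $d$ is arbitrary, giving the set $N_G(a)\times V(H)$), or $a=c$ and $bd\in E_2$ (giving the set $\{a\}\times N_H(b)$). Taking the union of these two cases yields the claimed neighborhood, so (i) requires no more than restating the defining condition.

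For (ii), I would assume $a\ne c$ and argue by a two-sided inequality. For the lower bound I would use the projection $\pi$ onto the first coordinate: along any edge of $G\circ H$ the first coordinates are either equal or adjacent in $G$, so the $\pi$-image of any $(a,b)$--$(c,d)$ path is a walk in $G$ from $a$ to $c$, whence its length is at least $d_G(a,c)$. For the upper bound I would lift a shortest $a$--$c$ path $a=a_0,a_1,\dots,a_k=c$ (with $k=d_G(a,c)\ge 1$) to the path $(a,b),(a_1,d),(a_2,d),\dots,(a_{k-1},d),(c,d)$, which is valid because consecutive first coordinates are adjacent in $G$ and hence the corresponding product vertices are adjacent regardless of their second coordinates. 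This gives $d_{G\circ H}((a,b),(c,d))\le k$, and combining the two bounds proves the equality.

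For (iii), with $a=c$, I would split into cases according to $d_H(b,d)$. If $b=d$ the distance is $0$; if $bd\in E_2$, then $(a,b)$ and $(a,d)$ are adjacent by (i), giving distance $1$; and if $d_H(b,d)\ge 2$ (including the case where $b,d$ lie in different components of $H$), then $(a,d)\notin N_{G\circ H}(a,b)$ by (i), so the distance is at least $2$, while the non-triviality and connectivity of $G$ supply a neighbor $a'$ of $a$, yielding the length-$2$ path $(a,b),(a',b),(a,d)$ and hence distance exactly $2$. In every case the value equals $\min\{d_H(b,d),2\}$. The only real subtlety, and the step I would be most careful about, is the projection argument in (ii): one must note that product edges can collapse to a single vertex under $\pi$, so the image is in general a walk rather than a path, and it is precisely the existence of a common neighbor $a'$ in the connected non-trivial graph $G$ that prevents the distance in (iii) from ever exceeding $2$.
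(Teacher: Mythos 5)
Your proof is correct: the verification of (i) from the adjacency definition, the two-sided bound in (ii) via the first-coordinate projection and the lift of a shortest $a$--$c$ path, and the case analysis in (iii) using a neighbor $a'$ of $a$ to realize distance $2$ are all sound, and you rightly flag that the projected image is a walk rather than a path. Note that the paper offers no proof of this claim at all --- it is quoted as a known result from the Handbook of Product Graphs \cite{Hammack2011} --- so there is nothing to compare against; your argument is the standard direct verification and fills that gap completely.
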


From the next lemmas we can describe the structure of the strong resolving graph of $G\circ H$.

\begin{lemma}\label{lem mmd}
Let $G$ be a connected non-trivial graph and let $H$ be a non-trivial graph. Let $a,b\in V(G)$ such that they are not true twin vertices and let $x,y\in V(H)$. Then $(a,x)$ and $(b,y)$ are mutually maximally distant in $G\circ H$ if and only if $a$ and $b$ are mutually maximally distant in $G$.
\end{lemma}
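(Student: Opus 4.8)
The plan is to prove both implications by carefully translating the "maximally distant" condition in $G\circ H$ into a condition in $G$, using the distance and neighborhood formulae from Claim~\ref{basictoolLexicographic}. Since $a$ and $b$ are not true twins, we have $a\ne b$, so by part (ii) the distance $d_{G\circ H}((a,x),(b,y))=d_G(a,b)$; this identity is the backbone of the argument, because it lets me compare distances from neighbors of $(a,x)$ to $(b,y)$ directly against distances from neighbors of $a$ to $b$ in $G$.

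First I would unpack what it means for $(a,x)$ to be maximally distant from $(b,y)$. By definition this requires $d_{G\circ H}((a,x),(b,y))\ge d_{G\circ H}((a,x)',(b,y))$ for every neighbor $(a,x)'$ of $(a,x)$. Using part (i), the neighbors of $(a,x)$ split into two types: vertices $(a,x')$ with $x'\in N_H(x)$, and vertices $(a',y')$ with $a'\in N_G(a)$ and $y'\in V(H)$ arbitrary. For the second type, part (ii) gives $d_{G\circ H}((a',y'),(b,y))=d_G(a',b)$, so the maximally-distant inequality against these neighbors is exactly the statement that $d_G(a',b)\le d_G(a,b)$ for all $a'\in N_G(a)$, i.e.\ that $a$ is maximally distant from $b$ in $G$. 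The first type of neighbor, $(a,x')$, stays in the same $G$-fiber, and here I must check that the inequality $d_{G\circ H}((a,x'),(b,y))\le d_{G\circ H}((a,x),(b,y))$ holds automatically: since $a\ne b$, both distances equal $d_G(a,b)$ by part (ii), so this inequality is trivially an equality and imposes no extra constraint. Hence $(a,x)$ is maximally distant from $(b,y)$ in $G\circ H$ if and only if $a$ is maximally distant from $b$ in $G$.

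By the symmetric argument, interchanging the roles of the two vertices, $(b,y)$ is maximally distant from $(a,x)$ if and only if $b$ is maximally distant from $a$ in $G$. Combining the two equivalences gives that $(a,x)$ and $(b,y)$ are mutually maximally distant in $G\circ H$ if and only if $a$ and $b$ are mutually maximally distant in $G$, which is exactly the claim.

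The main obstacle, and the reason the hypothesis that $a$ and $b$ are not true twins is needed, is to guarantee that $a\ne b$ so that part (ii) applies and the within-fiber neighbors $(a,x')$ cause no trouble; if instead $a=b$ the relevant distances would be governed by part (iii) (capped at $2$) and the fiber $H$-structure, so the reduction to $G$ would break down. I would therefore state explicitly at the outset that non-true-twin vertices are in particular distinct, and take care that every distance invoked genuinely uses the $a\ne c$ hypothesis of Claim~\ref{basictoolLexicographic}. The remaining work is the routine but careful bookkeeping of verifying the two neighbor types in each direction.
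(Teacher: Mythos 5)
Your reduction has a genuine gap in the forward direction. You claim that, for a cross-fiber neighbor $(a',y')$ of $(a,x)$ with $a'\in N_G(a)$, Claim~\ref{basictoolLexicographic}~(ii) gives $d_{G\circ H}((a',y'),(b,y))=d_G(a',b)$, and you conclude that ``$(a,x)$ is maximally distant from $(b,y)$ if and only if $a$ is maximally distant from $b$.'' But part (ii) only applies when $a'\ne b$; if $b\in N_G(a)$, the neighbor $(b,y')$ of $(a,x)$ lies in the fiber of $b$, and by part (iii) its distance to $(b,y)$ is $\min\{d_H(y',y),2\}$, not $d_G(b,b)=0$. This makes your intermediate equivalence false as stated: take $G=P_3$ with $a\sim b\sim c$ and $H=P_3$. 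Then $a$ is maximally distant from $b$ in $G$, yet $(a,x)$ is not maximally distant from $(b,y)$ in $G\circ H$, because some neighbor $(b,y')$ of $(a,x)$ satisfies $d_{G\circ H}((b,y'),(b,y))=2>1=d_{G\circ H}((a,x),(b,y))$. Since your final step combines two equivalences of which the left-to-right halves are false in general, the argument as written does not prove the lemma, even though the lemma itself is true.

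The missing idea --- and the first line of the paper's own proof --- is that if $a$ and $b$ are mutually maximally distant in $G$ and are not true twins, then $d_G(a,b)\ge 2$: if they were adjacent, maximal distance in both directions would force $N_G[a]=N_G[b]$. Once $d_G(a,b)\ge 2$ is in hand, no neighbor $(a',y')$ of $(a,x)$ has $a'=b$ (and symmetrically no neighbor of $(b,y)$ lies in the fiber of $a$), so every cross-fiber comparison is governed by part (ii) exactly as you intend, and your bookkeeping goes through. Note also that you have slightly misattributed the role of the non-true-twin hypothesis: it is not merely to ensure $a\ne b$ (which it does), but, jointly with mutual maximal distance, to exclude adjacency of $a$ and $b$. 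Your converse direction is essentially fine, since the case $a'=b$ there only yields the trivial inequality $0\le d_G(a,b)$.
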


\begin{proof}
Let $x,y\in V(H)$. We assume that $a,b\in V(G)$ are mutually maximally distant in $G$ and that they are not true twins. 
 First of all, notice that $d_G(a,b)\ge 2$ (if $d_G(a,b)=1$, then to be mutually maximally distant in $G$, they must be true twins).
 Hence, by Claim \ref{basictoolLexicographic} (i) we have that if
$(c,d)\in N_{G\circ H}(b,y)$, then either $c=b$ or $c\in N_G(b)$. In both cases, by Claim \ref{basictoolLexicographic} (ii) we obtain $d_{G\circ H}((a,x),(c,d))=d_G(a,c)\le d_G(a,b)=d_{G\circ H}((a,x),(b,y))$. So, $(b,y)$ is maximally distant from $(a,x)$ and, by symmetry, we conclude that $(b,y)$ and $(a,x)$ are mutually maximally distant in $G\circ H$.

Conversely, assume that $(a,x)$ and $(b,y)$, $a\ne b$, are mutually maximally distant in $G\circ H$. If  $c\in N_G(b)$, then for any $z\in V(H)$ we have $(c,z)\in N_{G\circ H}(b,y)$. Now,  by Claim \ref{basictoolLexicographic} (ii) we obtain $d_G(a,c)=d_{G\circ H}((a,x),(c,z))\le d_{G\circ H}((a,x),(b,y))=d_G(a,b)$.
So, $b$ is maximally distant from $a$ and, by symmetry, we conclude that $b$ and $a$ are mutually maximally distant in $G$.
\end{proof}

\begin{lemma}\label{lemmaTrueTwins}
Let $G$ be a connected non-trivial  graph, let $H$ be a graph of order $n\ge 2$, let $a,b\in V(G)$ be two different true twin vertices and let $x,y\in V(H)$. Then $(a,x)$ and $(b,y)$ are  mutually maximally distant in $G\circ H$ if and only if both, $x$ and $y$, have degree $n-1$.
\end{lemma}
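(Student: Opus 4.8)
The plan is to exploit the fact that true twins are adjacent, so that the pair $(a,x),(b,y)$ sits at distance $1$ in $G\circ H$, and then to translate ``maximally distant'' into a condition on which neighbors can escape to distance $2$. Since $a$ and $b$ are true twins we have $N_G[a]=N_G[b]$, and in particular $a\sim b$, whence $d_G(a,b)=1$; by Claim \ref{basictoolLexicographic}(ii) this gives $d_{G\circ H}((a,x),(b,y))=d_G(a,b)=1$. Thus $(a,x)$ is maximally distant from $(b,y)$ precisely when every neighbor of $(a,x)$ lies within distance $1$ of $(b,y)$, and symmetrically for $(b,y)$.

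First I would enumerate $N_{G\circ H}(a,x)$ using Claim \ref{basictoolLexicographic}(i), namely $\left(\{a\}\times N_H(x)\right)\cup\left(N_G(a)\times V(H)\right)$, and compute the distance from $(b,y)$ to each type of neighbor. For a neighbor $(a,d)$ with $d\in N_H(x)$, Claim \ref{basictoolLexicographic}(ii) gives distance $d_G(b,a)=1$, so these never obstruct. For a neighbor $(c,d)$ with $c\in N_G(a)$ and $c\ne b$, the twin identity $N_G[a]=N_G[b]$ forces $c\in N_G(b)$, so again the distance is $d_G(b,c)=1$.

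The decisive case is $c=b$: because $a\sim b$ we have $b\in N_G(a)$, so the entire fiber $\{b\}\times V(H)$ consists of neighbors of $(a,x)$. For such a neighbor $(b,d)$ Claim \ref{basictoolLexicographic}(iii) gives $d_{G\circ H}((b,y),(b,d))=\min\{d_H(y,d),2\}$, and requiring this to be at most $1$ for every $d\in V(H)$ is exactly the statement that $d_H(y,d)\le 1$ for all $d$, i.e.\ that $y$ is adjacent to every other vertex of $H$, i.e.\ $\deg_H(y)=n-1$. Hence $(a,x)$ is maximally distant from $(b,y)$ if and only if $\deg_H(y)=n-1$, and interchanging the roles of the two vertices shows $(b,y)$ is maximally distant from $(a,x)$ if and only if $\deg_H(x)=n-1$. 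Combining the two gives the claimed equivalence.

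For the converse direction I would simply run the same computation in reverse: assuming $\deg_H(x)=\deg_H(y)=n-1$, the three cases above show that every neighbor of $(a,x)$ is at distance at most $1$ from $(b,y)$ and vice versa, so the two vertices are mutually maximally distant. The only point demanding care is the bookkeeping in the decisive case: one must notice that $b$ itself is a neighbor of $a$ in $G$ (which is where the true-twin hypothesis is genuinely used beyond mere adjacency), so that the obstruction ranges over all of $V(H)$ rather than only over $N_H(y)$, forcing the full-degree condition rather than something weaker.
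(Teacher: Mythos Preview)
Your argument is correct and follows the same underlying idea as the paper: both proofs use Claim~\ref{basictoolLexicographic} to reduce the question to the fibre $\{b\}\times V(H)$ (respectively $\{a\}\times V(H)$) and then read off the full-degree condition. The only stylistic difference is that the paper handles the forward implication by observing that when $\deg_H(x)=\deg_H(y)=n-1$ the vertices $(a,x)$ and $(b,y)$ are themselves true twins in $G\circ H$ (hence automatically mutually maximally distant), and handles the backward implication by exhibiting a single offending neighbour $(a,z)$ with $z\notin N_H(x)$; you instead run a uniform case analysis over all neighbours of $(a,x)$ and obtain both directions at once. Your last paragraph is therefore redundant, since your ``iff'' computation already covers the converse.
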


\begin{proof}
If  $x\in V(H)$ has degree $n-1$, then for any  $y\in V(H)$ of degree $n-1$ we have that
$(a,x)$  and $(b,y)$ are true twins in $G\circ H$. Hence, $(a,x)$ and $(b,y)$ are  mutually maximally distant in $G\circ H$.

Now, suppose that  there exists $z\in V(H)-N_H(x)$. Hence, Claim \ref{basictoolLexicographic} (iii) leads to $d_{G\circ H}((a,x),(a,z))=2$. Also, for every $y\in V(H)$, Claim \ref{basictoolLexicographic} (ii) leads to $d_{G\circ H}((a,x),(b,y))=1$. Thus, we conclude that $(a,x)$ and $(b,y)$ are not mutually maximally distant in $G\circ H$.
\end{proof}

In order to present our results we need to introduce some more terminology. Given a graph  $G$, we  define $G^*$ as the graph with vertex set $V(G^*)=V(G)$ such that  two vertices $u,v$ are adjacent in $G^*$ if and only if either $d_G(u,v)\ge 2$ or  $u,v$ are true twins. If a graph $G$ has at least one isolated vertex, then we denote by 
$G_-$ the graph obtained from $G$ by removing all its isolated vertices. In this sense, $G^*_-$ is obtained from $G^*$ by removing all its isolated vertices. Notice that $G^*$ satisfies the following straightforward properties.

\begin{remark}\label{rem G*}
 Let $G$ be a connected graph of diameter $D(G)$, order $n$ and maximum degree $\Delta(G).$
\begin{enumerate}[{\rm (i)}]
\item If $\Delta(G)\le n-2$, then $G^*\cong (K_1+G)_{SR}$. 
\item If $D(G)\le 2$, then $G^*_-\cong G_{SR}$.

\item If $G$ has no true twins, then $G^*\cong G^c$.
\end{enumerate}
\end{remark}

\begin{lemma}\label{lem mmd in copy H}
Let $G$ be a connected non-trivial graph. Let $x,y\in V(H)$ be two distinct vertices of a graph $H$  and let $a\in V(G)$. Then $(a,x)$ and $(a,y)$ are mutually maximally distant vertices in $G\circ H$ if and only if $x$ and $y$ are adjacent in $H^*$.
\end{lemma}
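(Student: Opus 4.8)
The plan is to reduce everything to the within-fibre distance formula of Claim \ref{basictoolLexicographic}(iii). Since $x\neq y$, that formula gives $d_{G\circ H}((a,x),(a,y))=\min\{d_H(x,y),2\}$, which equals $1$ when $x\sim y$ in $H$ and equals $2$ when $d_H(x,y)\ge 2$ (including the case where $x$ and $y$ lie in different components of $H$, where $d_H(x,y)=\infty$). Write $\delta$ for this distance. Recalling that $x$ and $y$ are adjacent in $H^*$ exactly when $d_H(x,y)\ge 2$ or $x,y$ are true twins in $H$, I would prove the equivalence by determining precisely when $(a,x)$ is maximally distant from $(a,y)$ and then symmetrising in $x$ and $y$.

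First I would classify the neighbours of $(a,x)$ using Claim \ref{basictoolLexicographic}(i): they are the vertices $(c,z)$ with $c\in N_G(a)$ and the vertices $(a,z)$ with $z\in N_H(x)$. For a neighbour of the first kind Claim \ref{basictoolLexicographic}(ii) gives $d_{G\circ H}((a,y),(c,z))=d_G(a,c)=1\le\delta$, so neighbours of this type never obstruct maximal distance (they exist because $G$ is connected and non-trivial, but they are harmless). Hence $(a,x)$ is maximally distant from $(a,y)$ if and only if every within-fibre neighbour $(a,z)$, $z\in N_H(x)$, satisfies $\min\{d_H(y,z),2\}=d_{G\circ H}((a,y),(a,z))\le\delta$.

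I would then split on the value of $\delta$. If $\delta=2$, that is $d_H(x,y)\ge 2$, the inequality above holds automatically for every such $z$, so $(a,x)$ is maximally distant from $(a,y)$; by symmetry the pair is mutually maximally distant, which matches $H^*$-adjacency through the clause $d_H(x,y)\ge 2$. If $\delta=1$, that is $x\sim y$ in $H$, the condition becomes $d_H(y,z)\le 1$ for every $z\in N_H(x)$, i.e.\ $N_H(x)\subseteq N_H[y]$; since $x\sim y$ forces $x\in N_H[y]$, this is equivalent to $N_H[x]\subseteq N_H[y]$. Symmetrically, $(a,y)$ is maximally distant from $(a,x)$ iff $N_H[y]\subseteq N_H[x]$, so the pair is mutually maximally distant iff $N_H[x]=N_H[y]$, i.e.\ $x$ and $y$ are true twins, which is precisely the other clause of $H^*$-adjacency.

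I do not expect a genuine obstacle, since the content lies entirely in the two distance formulae of Claim \ref{basictoolLexicographic}. The only point requiring care is the $\delta=1$ case, where one must pass correctly from the open-neighbourhood containment $N_H(x)\subseteq N_H[y]$ to the closed-neighbourhood equality $N_H[x]=N_H[y]$ defining true twins, and keep in mind that the ``$d_H\ge 2$'' clause of $H^*$ absorbs the disconnected case $d_H(x,y)=\infty$.
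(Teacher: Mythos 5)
Your proof is correct and follows essentially the same route as the paper's: both rest on Claim \ref{basictoolLexicographic}, note that the neighbours of $(a,x)$ outside the fibre $\{a\}\times V(H)$ can never obstruct maximal distance, and reduce the statement to ``$(a,x)$ and $(a,y)$ are non-adjacent or true twins,'' which translates to adjacency in $H^*$. The paper states this intermediate equivalence tersely while you spell out the neighbourhood containments (in particular the passage from $N_H(x)\subseteq N_H[y]$ to $N_H[x]=N_H[y]$), but the underlying argument is the same.
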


\begin{proof}
By Claim \ref{basictoolLexicographic} (iii),  $ d_{G\circ H}((a,x),(a,y))\le 2$ and, by Claim \ref{basictoolLexicographic} (i), if $c\ne a$, then  $(c,w)\in N_{G\circ H}(a,x)$ if and only if $c\in N_{G}(a)$. Hence,  $(a,x)$ and $(a,y)$ are mutually maximally distant if and only if either $(a,x)$ and $(a,y)$ are true twins in $G\circ H$ or $(a,x)$ and $(a,y)$ are not adjacent in $G\circ H$.

On one hand, by definition of lexicographic product, $(a,x)$ and $(a,y)$ are not adjacent in $G\circ H$ if and only if $x$ and $y$ are not adjacent in $ H$.

On the other hand, by Claim \ref{basictoolLexicographic} (i),  $(a,x)$ and $(a,y)$ are true twins in $G\circ H$ if and only if $x$ and $y$ are true twins in $H$.

Therefore, the result follows.
\end{proof}

\begin{proposition}\label{SR-graph-twins-free}
Let $G$ be a connected  graph of order $n\ge 2$ and let $H$ be a non-complete graph of order $n'\ge 2$.
If $G$ has no true twin vertices, then
$$(G\circ H)_{SR}\cong \left(G_{SR}\circ H^*\right) \cup \bigcup_{i=1}^{n-|\partial(G)|} H^*_-. $$
\end{proposition}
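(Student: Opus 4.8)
The plan is to read off both the vertex set and the edge set of $(G\circ H)_{SR}$ directly from the three preceding lemmas, exploiting the hypothesis that $G$ has no true twins. Since $G$ has no two distinct true twin vertices, Lemma \ref{lemmaTrueTwins} never applies, so the mutually maximally distant relation between vertices with distinct first coordinates is governed entirely by Lemma \ref{lem mmd}: for $a\ne c$ and any $x,y\in V(H)$, the vertices $(a,x)$ and $(c,y)$ are mutually maximally distant in $G\circ H$ if and only if $a$ and $c$ are mutually maximally distant in $G$, i.e.\ if and only if $ac\in E(G_{SR})$. For equal first coordinates, Lemma \ref{lem mmd in copy H} gives that $(a,x)$ and $(a,y)$ are mutually maximally distant if and only if $xy\in E(H^*)$.

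First I would compute the boundary $\partial(G\circ H)$, which is the vertex set of $(G\circ H)_{SR}$. If $a\in\partial(G)$, pick $c$ with $ac\in E(G_{SR})$; then by Lemma \ref{lem mmd} the vertex $(a,x)$ is mutually maximally distant from $(c,y)$ for every choice of $x,y$, so the whole fibre $\{a\}\times V(H)$ lies in $\partial(G\circ H)$. If $a\notin\partial(G)$, then $(a,x)$ has no mutually maximally distant partner with a different first coordinate by Lemma \ref{lem mmd}, so $(a,x)\in\partial(G\circ H)$ if and only if it has such a partner inside its own fibre, which by Lemma \ref{lem mmd in copy H} happens exactly when $x$ is non-isolated in $H^*$, i.e.\ $x\in V(H^*_-)$. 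Thus $V\big((G\circ H)_{SR}\big)$ splits as the disjoint union of $\partial(G)\times V(H)$ and the $n-|\partial(G)|$ fibres $\{a\}\times V(H^*_-)$ with $a\notin\partial(G)$; here the assumption that $H$ is non-complete guarantees $H^*_-$ is non-empty, so these latter fibres are genuine.

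Next I would identify the induced subgraphs. On $\partial(G)\times V(H)$ the two adjacency rules recalled above are precisely the defining adjacency of the lexicographic product, so the subgraph induced on this set is $G_{SR}\circ H^*$ (the first factor must be $G_{SR}$, since a cross-fibre edge needs $ac\in E(G_{SR})$ and hence $a,c\in\partial(G)$). For a fixed $a\notin\partial(G)$, the subgraph induced on $\{a\}\times V(H^*_-)$ has $(a,x)\sim(a,y)$ exactly when $xy\in E(H^*)$, which restricted to the non-isolated vertices is a copy of $H^*_-$. Finally I would check there are no edges joining different pieces: an edge between $(a,x)$ and $(b,y)$ with $a\ne b$ forces $ab\in E(G_{SR})$, hence $a,b\in\partial(G)$, so no edge can leave the block $\partial(G)\times V(H)$; in particular the fibres over $V(G)\setminus\partial(G)$ are pairwise non-adjacent. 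Assembling these three observations yields the claimed isomorphism.

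The main obstacle I anticipate is the careful bookkeeping of the boundary, in particular the asymmetry that the fibres over $\partial(G)$ contribute the full $H^*$ while the remaining fibres contribute only $H^*_-$; getting the vertex set exactly right, and then verifying the complete absence of cross-edges between the two blocks, is where a routine but error-prone case analysis must be carried out. The edge identifications themselves reduce immediately to matching the adjacency rule of the lexicographic product against Lemmas \ref{lem mmd} and \ref{lem mmd in copy H}.
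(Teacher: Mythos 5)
Your proposal is correct and follows essentially the same route as the paper: it uses Lemma \ref{lem mmd} (with Lemma \ref{lemmaTrueTwins} vacuous since $G$ has no true twins) for cross-fibre adjacency and Lemma \ref{lem mmd in copy H} for within-fibre adjacency, then decomposes $\partial(G\circ H)$ into the fibres over $\partial(G)$, which induce $G_{SR}\circ H^*$, and the fibres over $V(G)\setminus\partial(G)$, each inducing a copy of $H^*_-$, with no edges between the pieces. In fact you supply more of the boundary bookkeeping than the paper, which merely lists these facts as consequences of the lemmas.
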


\begin{proof}
We assume that $G$ has no true twin vertices. By Lemmas \ref{lem mmd} and \ref{lem mmd in copy H}, we have the following facts.
\begin{itemize}
\item For any $a\not\in \partial (G)$ it follows that $(G\circ H)_{SR}$ has a subgraph, say $H_a$, induced by $(\{a\}\times V(H))\cap \partial(G\circ H)$ which is isomorphic to $H^*_-$
\item For any $b\in \partial (G)$, we have that $(G\circ H)_{SR}$ has a subgraph, say $H_b$, induced by $(\{b\}\times V(H))\cap \partial (G\circ H)$ which is isomorphic to $H^* $.
\item The set $(\partial(G)\times V(H))\cap \partial (G\circ H)$ induces a subgraph in $(G\circ H)_{SR}$ which is isomorphic to $G_{SR}\circ H^*$.
\item For any $a\not\in \partial (G)$ and any $b\in \partial (G)$ there are no edges of $(G\circ H)_{SR}$ connecting vertices belonging to $H_a$ with vertices belonging to $H_b$.
\item For any different vertices $a_1,a_2\not\in \partial (G)$ there are no edges of $(G\circ H)_{SR}$ connecting vertices belonging to $H_{a_1}$ with vertices belonging to $H_{a_2}$.
\end{itemize}
Therefore, the result follows.
\end{proof}

Figure \ref{no true twins} shows the graph $P_4\circ P_3$ and its strong resolving graph. Notice that $(P_3)_-^*\cong K_2$, $(P_3)^*\cong K_2\cup K_1$ and $(P_4)_{SR}\cong K_2$. So, $(P_4\circ P_3)_{SR}\cong K_2\circ (K_2\cup K_1)\cup K_2\cup K_2$.

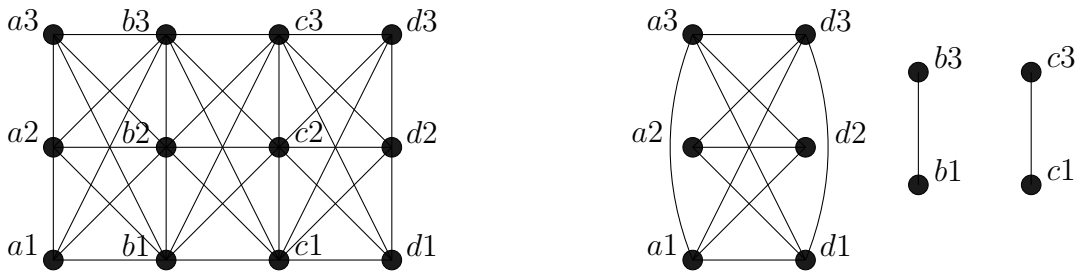
\begin{figure}[h]
\centering
\begin{tikzpicture}
\draw(0,0)--(0,3);
\draw(1.5,0)--(1.5,3);
\draw(3,0)--(3,3);
\draw(4.5,0)--(4.5,3);
\draw(0,0)--(4.5,0);
\draw(0,1.5)--(4.5,1.5);
\draw(0,3)--(4.5,3);

\draw(0,0)--(1.5,1.5);
\draw(0,0)--(1.5,3);

\draw(0,1.5)--(1.5,0);
\draw(0,1.5)--(1.5,3);

\draw(0,3)--(1.5,0);
\draw(0,3)--(1.5,1.5);

\draw(1.5,0)--(3,1.5);
\draw(1.5,0)--(3,3);

\draw(1.5,1.5)--(3,0);
\draw(1.5,1.5)--(3,3);

\draw(1.5,3)--(3,0);
\draw(1.5,3)--(3,1.5);

\draw(3,0)--(4.5,1.5);
\draw(3,0)--(4.5,3);

\draw(3,1.5)--(4.5,0);
\draw(3,1.5)--(4.5,3);

\draw(3,3)--(4.5,0);
\draw(3,3)--(4.5,1.5);

\filldraw[fill opacity=0.9,fill=black]  (0,0) circle (0.13cm);
\filldraw[fill opacity=0.9,fill=black]  (0,1.5) circle (0.13cm);
\filldraw[fill opacity=0.9,fill=black]  (0,3) circle (0.13cm);
\filldraw[fill opacity=0.9,fill=black]  (1.5,0) circle (0.13cm);
\filldraw[fill opacity=0.9,fill=black]  (1.5,1.5) circle (0.13cm);
\filldraw[fill opacity=0.9,fill=black]  (1.5,3) circle (0.13cm);
\filldraw[fill opacity=0.9,fill=black]  (3,0) circle (0.13cm);
\filldraw[fill opacity=0.9,fill=black]  (3,1.5) circle (0.13cm);
\filldraw[fill opacity=0.9,fill=black]  (3,3) circle (0.13cm);
\filldraw[fill opacity=0.9,fill=black]  (4.5,0) circle (0.13cm);
\filldraw[fill opacity=0.9,fill=black]  (4.5,1.5) circle (0.13cm);
\filldraw[fill opacity=0.9,fill=black]  (4.5,3) circle (0.13cm);
\node at (-0.4,0.2) {$a1$ };
\node at (-0.4,1.7) {$a2$ };
\node at (-0.4,3.2) {$a3$ };
\node at (1.1,0.2) {$b1$ };
\node at (1.1,1.7) {$b2$ };
\node at (1.1,3.2) {$b3$ };
\node at (3.4,0.2) {$c1$ };
\node at (3.4,1.7) {$c2$ };
\node at (3.4,3.2) {$c3$ };
\node at (4.9,0.2) {$d1$ };
\node at (4.9,1.7) {$d2$ };
\node at (4.9,3.2) {$d3$ };

\draw(8.5,0)--(10,1.5);
\draw(8.5,0)--(10,3);

\draw(8.5,1.5)--(10,0);
\draw(8.5,1.5)--(10,3);

\draw(8.5,3)--(10,0);
\draw(8.5,3)--(10,1.5);

\draw(11.5,1)--(11.5,2.5);

\draw(13,1)--(13,2.5);

\draw (8.5,0) -- (10,0);
\draw (8.5,1.5) -- (10,1.5);
\draw (8.5,3) -- (10,3);

\filldraw[fill opacity=0.9,fill=black]  (8.5,0) circle (0.13cm);
\filldraw[fill opacity=0.9,fill=black]  (8.5,1.5) circle (0.13cm);
\filldraw[fill opacity=0.9,fill=black]  (8.5,3) circle (0.13cm);
\filldraw[fill opacity=0.9,fill=black]  (10,0) circle (0.13cm);
\filldraw[fill opacity=0.9,fill=black]  (10,1.5) circle (0.13cm);
\filldraw[fill opacity=0.9,fill=black]  (10,3) circle (0.13cm);
\filldraw[fill opacity=0.9,fill=black]  (11.5,1) circle (0.13cm);
\filldraw[fill opacity=0.9,fill=black]  (11.5,2.5) circle (0.13cm);
\filldraw[fill opacity=0.9,fill=black]  (13,1) circle (0.13cm);
\filldraw[fill opacity=0.9,fill=black]  (13,2.5) circle (0.13cm);

\draw (8.5,0) .. controls (8.1,1) and (8.1,2) .. (8.5,3);
\draw (10,0) .. controls (10.4,1) and (10.4,2) .. (10,3);

\node at (8.1,0.2) {$a1$ };
\node at (7.9,1.7) {$a2$ };
\node at (8.1,3.2) {$a3$ };
\node at (11.9,1.2) {$b1$ };
\node at (11.9,2.7) {$b3$ };
\node at (10.4,0.2) {$d1$ };
\node at (10.6,1.7) {$d2$ };
\node at (10.4,3.2) {$d3$ };
\node at (13.4,1.2) {$c1$ };
\node at (13.4,2.7) {$c3$ };

\end{tikzpicture}
\caption{The graph $P_4\circ P_3$ and its strong resolving graph}
\label{no true twins}
\end{figure}

The following well-known result will be a useful tool in determining the strong metric dimension of lexicographic product graphs.

\begin{theorem}{\em \cite{Geller1975}}\label{lem lexicographic coveringNum}
For any graphs $G$ and $H$ of order $n$ and $n'$, respectively, $$\alpha(G\circ H) =n \alpha(H)+n'\alpha(G) -\alpha(G) \alpha(H).$$
\end{theorem}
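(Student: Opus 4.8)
The plan is to reduce the computation of the vertex cover number $\alpha(G\circ H)$ to that of the independence number $\beta(G\circ H)$ by means of Gallai's theorem (Theorem \ref{th gallai}), and then to establish the product formula $\beta(G\circ H)=\beta(G)\beta(H)$. Since $G\circ H$ has order $nn'$, Gallai's theorem gives $\alpha(G\circ H)=nn'-\beta(G\circ H)$, and substituting $\beta(G)=n-\alpha(G)$ and $\beta(H)=n'-\alpha(H)$ into the product formula yields, after expanding $nn'-(n-\alpha(G))(n'-\alpha(H))$, exactly $n\alpha(H)+n'\alpha(G)-\alpha(G)\alpha(H)$. Thus the whole argument rests on the single identity $\beta(G\circ H)=\beta(G)\beta(H)$.

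To prove this identity I would first record the adjacency rule in contrapositive form: two distinct vertices $(a,b)$ and $(c,d)$ of $G\circ H$ are \emph{non}-adjacent precisely when either $a\ne c$ and $ac\notin E_1$, or $a=c$ and $bd\notin E_2$. For the lower bound, take a $\beta(G)$-set $A$ of $G$ and a $\beta(H)$-set $B$ of $H$ and verify that $A\times B$ is independent in $G\circ H$: for distinct $(a,b),(c,d)\in A\times B$, if $a\ne c$ then $ac\notin E_1$ because $A$ is independent, whereas if $a=c$ then $bd\notin E_2$ because $B$ is independent, so in either case the two vertices are non-adjacent. Hence $\beta(G\circ H)\ge\beta(G)\beta(H)$.

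For the upper bound, let $S$ be any independent set of $G\circ H$ and let $A=\{a\in V(G):(a,b)\in S\text{ for some }b\in V(H)\}$ be its projection onto $V(G)$. This projection is independent in $G$: if two distinct $a,c\in A$ were adjacent in $G$, then any pair $(a,b),(c,d)\in S$ would be adjacent in $G\circ H$, contradicting the independence of $S$; thus $|A|\le\beta(G)$. Moreover, for each fixed $a\in A$ the fiber $S_a=\{b\in V(H):(a,b)\in S\}$ is independent in $H$, since $(a,b)$ and $(a,d)$ are adjacent precisely when $bd\in E_2$; thus $|S_a|\le\beta(H)$. Summing over the fibers gives $|S|=\sum_{a\in A}|S_a|\le|A|\,\beta(H)\le\beta(G)\beta(H)$, which is the desired bound.

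The two bounds together give $\beta(G\circ H)=\beta(G)\beta(H)$, and the theorem follows by the reduction described above. I expect no serious obstacle here; the only point demanding a little care is the upper bound, where one must argue that the projection onto $V(G)$ is itself independent and simultaneously control the size of each fiber, rather than treating $S$ as an unstructured subset of $V_1\times V_2$.
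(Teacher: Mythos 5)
Your proof is correct. Note, however, that the paper does not prove this statement at all: it is quoted as a known result of Geller and Stahl \cite{Geller1975}, so there is no internal argument to compare yours against. What you supply is the standard self-contained derivation: the identity $\beta(G\circ H)=\beta(G)\beta(H)$ (lower bound by checking that the product $A\times B$ of a $\beta(G)$-set and a $\beta(H)$-set is independent, upper bound by projecting an arbitrary independent set $S$ onto $V(G)$ and bounding each fiber, so that $|S|=\sum_{a\in A}|S_a|\le |A|\,\beta(H)\le\beta(G)\beta(H)$), combined with Gallai's theorem (Theorem \ref{th gallai}) applied to $G$, $H$ and $G\circ H$ to convert the multiplicative formula for $\beta$ into the stated inclusion--exclusion formula for $\alpha$. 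All steps check out against the paper's definition of adjacency in $G\circ H$: the two cases $a\ne c$ and $a=c$ in your independence arguments are exactly the two clauses of that definition, and the expansion $nn'-(n-\alpha(G))(n'-\alpha(H))=n\alpha(H)+n'\alpha(G)-\alpha(G)\alpha(H)$ is routine. This is a perfectly reasonable way to make the cited result self-contained.
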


\begin{theorem}\label{ThGhasNotrueTwins}
Let $G$ be a connected graph of order $n\ge 2$ and let $H$ be a  graph of order $n'\ge 2$. If $G$ has no true twin vertices, then the following assertions hold:
\begin{enumerate}[{\rm (i)}]
\item
If $D(H)\le 2$, then
$\dim_s(G\circ H)=n\cdot \dim_s(H) + n'\cdot \dim_s(G)-\dim_s(G)\dim_s(H).$
\item If $D(H)>  2$, then
$\dim_s(G\circ H)=n\cdot \dim_s(K_1+H) + n'\cdot \dim_s(G)-\dim_s(G)\dim_s(K_1+H).$
\end{enumerate}
\end{theorem}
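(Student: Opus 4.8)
The plan is to reduce everything to vertex cover computations via Theorem~\ref{th oellermann}, which gives $\dim_s(G\circ H)=\alpha((G\circ H)_{SR})$, and then to exploit the explicit description of $(G\circ H)_{SR}$ furnished by Proposition~\ref{SR-graph-twins-free}. Assume first that $H$ is non-complete. Since $G$ has no true twins, Proposition~\ref{SR-graph-twins-free} applies and yields
$$(G\circ H)_{SR}\cong \left(G_{SR}\circ H^*\right)\cup\bigcup_{i=1}^{n-|\partial(G)|}H^*_-.$$
Because the vertex cover number is additive over a disjoint union of graphs, and because isolated vertices are irrelevant to any vertex cover (so that $\alpha(H^*_-)=\alpha(H^*)$), I would first write
$$\alpha((G\circ H)_{SR})=\alpha\left(G_{SR}\circ H^*\right)+\bigl(n-|\partial(G)|\bigr)\,\alpha(H^*).$$

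Next I would evaluate $\alpha(G_{SR}\circ H^*)$ with Theorem~\ref{lem lexicographic coveringNum}, applied to the factors $G_{SR}$ (of order $|V(G_{SR})|=|\partial(G)|$) and $H^*$ (of order $n'$). This gives
$$\alpha\left(G_{SR}\circ H^*\right)=|\partial(G)|\,\alpha(H^*)+n'\,\alpha(G_{SR})-\alpha(G_{SR})\,\alpha(H^*).$$
Substituting into the previous display, the two terms carrying $\alpha(H^*)$ with coefficients $|\partial(G)|$ and $n-|\partial(G)|$ collapse to $n\,\alpha(H^*)$, and using $\alpha(G_{SR})=\dim_s(G)$ (Theorem~\ref{th oellermann} applied to $G$) I obtain the clean identity
$$\dim_s(G\circ H)=n\,\alpha(H^*)+n'\,\dim_s(G)-\dim_s(G)\,\alpha(H^*).$$
This single formula reduces both parts of the theorem to identifying the quantity $\alpha(H^*)$ in each regime.

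The remaining task is therefore to interpret $\alpha(H^*)$ through Remark~\ref{rem G*}. When $D(H)\le 2$ (so $H$ is connected), Remark~\ref{rem G*}(ii) gives $H^*_-\cong H_{SR}$, whence $\alpha(H^*)=\alpha(H^*_-)=\alpha(H_{SR})=\dim_s(H)$ by Theorem~\ref{th oellermann}; substituting yields assertion (i). When $D(H)>2$, I would first observe that a vertex of degree $n'-1$ would force $D(H)\le 2$, so $\Delta(H)\le n'-2$; this is exactly the hypothesis of Remark~\ref{rem G*}(i), giving $H^*\cong(K_1+H)_{SR}$ and hence $\alpha(H^*)=\dim_s(K_1+H)$. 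Substituting yields assertion (ii).

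Finally I would dispose of the case in which $H$ is complete, which falls under (i) since then $D(H)=1$. Here Proposition~\ref{SR-graph-twins-free} is not directly available, but the same structural description of $(G\circ H)_{SR}$ can be recovered from Lemmas~\ref{lem mmd} and~\ref{lem mmd in copy H} (Lemma~\ref{lemmaTrueTwins} being vacuous as $G$ has no true twins): one has $H^*\cong H^*_-\cong K_{n'}$, the fibers over $\partial(G)$ assemble into $G_{SR}\circ K_{n'}$, and each of the $n-|\partial(G)|$ remaining fibers contributes a copy of $K_{n'}$, so the computation above goes through verbatim with $\alpha(H^*)=n'-1=\dim_s(H)$. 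The only genuinely delicate points are the bookkeeping that merges the lexicographic-product cover term with the disjoint copies into the coefficient $n$, and the verification that the hypotheses of the two parts of Remark~\ref{rem G*} are met ($H$ connected with $D(H)\le 2$ in~(i); $\Delta(H)\le n'-2$ in~(ii)); everything else is routine substitution.
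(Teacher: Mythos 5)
Your argument is essentially identical to the paper's: both apply Theorem~\ref{th oellermann} together with Proposition~\ref{SR-graph-twins-free}, evaluate the lexicographic term via Theorem~\ref{lem lexicographic coveringNum}, merge the coefficients into $n\,\alpha(H^*)$, and then identify $\alpha(H^*)$ with $\dim_s(H)$ or $\dim_s(K_1+H)$ according to the diameter of $H$. If anything you are slightly more careful than the paper, which silently applies Proposition~\ref{SR-graph-twins-free} even when $H$ is complete (outside its stated hypotheses) and does not spell out that $D(H)>2$ forces $\Delta(H)\le n'-2$; your treatment of these points is correct.
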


\begin{proof}
By Theorem \ref{th oellermann} and Proposition \ref{SR-graph-twins-free} we have,
$$\dim_s(G\circ H)=\alpha(G_{SR}\circ H^*) + (n-|\partial(G)|) \alpha (H^*_-)$$
and, by Theorem \ref{lem lexicographic coveringNum} we have
\begin{equation}\label{DimHttfree}
\dim_s(G\circ H)=|\partial(G)| \alpha(H^*) +n'\alpha(G_{SR})-\alpha(G_{SR})\alpha(H^*)+ (n-|\partial(G)|) \alpha (H^*_-).
\end{equation}
 Now, if $D(H)\le 2$, then $\alpha(H^*)=\alpha(H^*_-)=\alpha(H_{SR})$ and, as a result,
$$\dim_s(G\circ H)=n \alpha(H_{SR}) +n'\alpha(G_{SR})-\alpha(G_{SR})\alpha(H_{SR}).$$
Also, and if $D(H)> 2$,
then $\alpha(H^*)=\alpha(H^*_-)=\alpha((K_1+H)_{SR})$, so
$$\dim_s(G\circ H)=n \alpha((K_1+H)_{SR}) +n'\alpha(G_{SR})-\alpha(G_{SR})\alpha((K_1+H)_{SR}).$$
Therefore, by Theorem \ref{th oellermann} we conclude the proof.
\end{proof}
Note that the case where $H$  is non-connected is also considered in Theorem \ref{ThGhasNotrueTwins},
 because we are assuming that if  $H$ is non-connected, then $D(H)=\infty  > 2$.

Now we show some particular examples of graphs $G$ without true twin vertices where $\dim_s(G)$ is easy to compute  or known.

\begin{enumerate}[(1)]
\item For any complete $k$-partite graph $G=K_{p_1,p_2,...,p_k}$ such that  $p_i\ge 2$, $i\in\{1,2,...,k\}$, we have $(G)_{SR}\cong\bigcup_{i=1}^{k}K_{p_i}$. Hence,
$\dim_s(G)=\sum_{i=1}^k(p_i-1).$

\item For any tree $T$  with $l(T)$ leaves, $(T)_{SR}\cong K_{l(T)}$, so $\dim_s(T)=l(T)-1$.

\item The strong resolving graph of any cycle graph  is   $(C_{2k})_{SR}\cong \bigcup_{i=1}^k K_2$ or $(C_{2k+1})_{SR}\cong C_{2k+1}$. So,
$\dim_s(C_{2k})=k$ and $\dim_s(C_{2k+1})=k+1$.

\item The strong resolving graph of any grid graph $P_r\Box P_t$  is $(P_r\Box P_t)_{SR}=K_2\cup K_2$. Thus, $\dim_s(P_r\Box P_t)=2$.

\item For any connected graph $G_1$ of order $n_1$ and any graph $G_2$, the corona graph $ G_1 \odot G_2$ is obtained by taking one copy of $G_1$ and $n_1$ copies of  $G_2$ and joining by an edge the $i$-th vertex of $G_1$ to every vertex of the $i$-th copy of $G_2$. It was shown in \cite{Kuziak2013} that if $n_1\ge 2$ and $G_2$ is a triangle free graph of order $n_2\ge 2$ and  maximum degree $\Delta(H)\le n_2-2$, then $\dim_2(G_1 \odot G_2)=n_1n_2-2$.
\end{enumerate}

Notice that by using Theorem \ref{ThGhasNotrueTwins} (or other ones given throughout the article), and the above known values for several families of graphs, we can obtain directly the strong metric dimension of several combinations of lexicographic product of two graphs. We omit these calculations and leave it to the reader.

According to Theorem \ref{ThGhasNotrueTwins} (i), for any connected graph $G$ without true twin vertices it holds  $\dim_s(G\circ K_{n'})=n(n'-1)+\dim_s(G).$ Now we will show that this formula holds for any connected graph $G$.

\begin{proposition}\label{SR G and complete}
For any connected non-trivial graph $G$ of order $n\ge 2$ and any integer $n'\ge 2$,
$$(G\circ K_{n'})_{SR}\cong (G_{SR}\circ K_{n'}) \cup \bigcup_{i=1}^{n-|\partial(G)|} K_{n'}.$$
\end{proposition}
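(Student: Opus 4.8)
The plan is to characterize the mutually maximally distant (MMD) pairs of vertices in $G\circ K_{n'}$ and then assemble them into the claimed strong resolving graph, following closely the structure of the proof of Proposition \ref{SR-graph-twins-free}. The essential simplification here is that $H=K_{n'}$ is complete, so every vertex of $H$ has degree $n'-1$ and any two of its vertices are true twins. This means $H^*\cong K_{n'}$ and $H^*_-\cong K_{n'}$ as well, which is exactly why the copies appearing in the decomposition are complete graphs rather than the $H^*$ and $H^*_-$ of the general case. First I would record these facts about $K_{n'}$ so that the MMD analysis can be reduced to the lemmas already proved.

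Next I would split the MMD pairs into three types according to their $G$-coordinates. For a single fiber $\{a\}\times V(K_{n'})$, Lemma \ref{lem mmd in copy H} tells us that $(a,x)$ and $(a,y)$ are MMD if and only if $x,y$ are adjacent in $(K_{n'})^*\cong K_{n'}$, which holds for all distinct $x,y$; hence every fiber contributes a clique $K_{n'}$. For pairs with distinct $G$-coordinates $a\ne b$, I would invoke Lemma \ref{lem mmd} when $a,b$ are not true twins and Lemma \ref{lemmaTrueTwins} when they are: in the latter case, since all vertices of $K_{n'}$ have degree $n'-1$, the pair $(a,x),(b,y)$ is always MMD. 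In either situation the existence of an MMD relation between fibers over $a$ and $b$ is governed precisely by whether $a,b$ are adjacent in $G_{SR}$, and when they are, the complete join of the two fibers appears. Combining these, the vertices whose $G$-coordinate lies in $\partial(G)$ induce exactly $G_{SR}\circ K_{n'}$, while each of the $n-|\partial(G)|$ vertices outside $\partial(G)$ contributes an isolated clique $K_{n'}$.

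The main subtlety, and the step I would treat most carefully, is verifying that a vertex $a\notin\partial(G)$ nonetheless yields a full clique $K_{n'}$ in $(G\circ K_{n'})_{SR}$ rather than disappearing. The point is that boundary membership in $G$ concerns MMD relations with \emph{other} $G$-vertices, but within a single fiber the intra-fiber MMD relations of Lemma \ref{lem mmd in copy H} survive regardless of whether $a\in\partial(G)$; since $(K_{n'})^*_-\cong K_{n'}$ has no isolated vertices, the whole fiber enters $\partial(G\circ K_{n'})$. I would also confirm the absence of cross edges between such isolated fibers and the rest, exactly as in the five bullet points of Proposition \ref{SR-graph-twins-free}, using that $a\notin\partial(G)$ forbids any inter-fiber MMD pair through Lemmas \ref{lem mmd} and \ref{lemmaTrueTwins}. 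Once these incidences are established the isomorphism in the statement follows by identifying the induced subgraphs, and the proof concludes.
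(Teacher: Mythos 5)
Your proposal is correct and follows essentially the same route as the paper: it uses Lemma \ref{lem mmd in copy H} to get a clique $K_{n'}$ on every fiber, Lemmas \ref{lem mmd} and \ref{lemmaTrueTwins} (the latter applying trivially since every vertex of $K_{n'}$ has degree $n'-1$) to show the fibers over $\partial(G)$ assemble into $G_{SR}\circ K_{n'}$, and the absence of inter-fiber MMD pairs for $a\notin\partial(G)$ to isolate the remaining $n-|\partial(G)|$ cliques. Your extra care about the true-twin case and about why non-boundary fibers still contribute full cliques only makes explicit what the paper leaves implicit.
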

\begin{proof}
Notice that $(K_{n'})^*\cong K_{n'}$ and, by Lemma \ref{lem mmd in copy H}, for any $a\in V(G)$, the subgraph of $(G\circ K_{n'})_{SR}$ induced by $(\{a\}\times V(K_{n'}))\cap \partial(G\circ K_{n'})$ is isomorphic to $K_{n'}$. Also, from Lemmas \ref{lem mmd} and \ref{lemmaTrueTwins}, the subgraph of $(G\circ K_{n'})_{SR}$ induced by $(\partial(G)\times V(K_{n'}))\cap \partial (G\circ K_{n'})$ is isomorphic to $G_{SR}\circ K_{n'}$. Moreover, for $a\not\in \partial (G)$ and $b\in \partial (G)$ there are not edges of $(G\circ K_{n'})_{SR}$ connecting vertices belonging to $\{a\}\times V(K_{n'})$ with vertices belonging to $\{b\}\times V(K_{n'})$. Therefore, the result follows.
\end{proof}

\begin{theorem}
For any connected non-trivial graph $G$ of order $n\ge 2$ and any integer $n'\ge 2$,
$$\dim_s(G\circ K_{n'})=n(n'-1)+\dim_s(G).$$
\end{theorem}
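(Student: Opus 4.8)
The plan is to reduce the whole computation to vertex cover numbers by combining Theorem~\ref{th oellermann}, the structural decomposition in Proposition~\ref{SR G and complete}, and Geller's formula (Theorem~\ref{lem lexicographic coveringNum}). First I would invoke Theorem~\ref{th oellermann} to write $\dim_s(G\circ K_{n'})=\alpha\big((G\circ K_{n'})_{SR}\big)$, and then substitute the explicit form of the strong resolving graph provided by Proposition~\ref{SR G and complete}, namely $(G\circ K_{n'})_{SR}\cong (G_{SR}\circ K_{n'})\cup\bigcup_{i=1}^{n-|\partial(G)|}K_{n'}$.

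The next step exploits that the vertex cover number is additive over disjoint unions, since a minimum vertex cover of a disjoint union is obtained by taking a minimum vertex cover in each component separately. Hence $\alpha\big((G\circ K_{n'})_{SR}\big)=\alpha(G_{SR}\circ K_{n'})+(n-|\partial(G)|)\,\alpha(K_{n'})$, and as $\alpha(K_{n'})=n'-1$ the second summand equals $(n-|\partial(G)|)(n'-1)$. For the first summand I would apply Theorem~\ref{lem lexicographic coveringNum} to the lexicographic product $G_{SR}\circ K_{n'}$, taking care with the roles of the two factors: $G_{SR}$ has order $|\partial(G)|$, since $V(G_{SR})=\partial(G)$, whereas $K_{n'}$ has order $n'$. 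This gives
\begin{equation*}
\alpha(G_{SR}\circ K_{n'})=|\partial(G)|\,\alpha(K_{n'})+n'\alpha(G_{SR})-\alpha(G_{SR})\alpha(K_{n'}).
\end{equation*}

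Finally I would substitute $\alpha(K_{n'})=n'-1$ and, using Theorem~\ref{th oellermann} once more, $\alpha(G_{SR})=\dim_s(G)$. The combination $n'\alpha(G_{SR})-\alpha(G_{SR})\alpha(K_{n'})$ collapses to $\alpha(G_{SR})\big(n'-(n'-1)\big)=\dim_s(G)$, so that $\alpha(G_{SR}\circ K_{n'})=|\partial(G)|(n'-1)+\dim_s(G)$. Adding back the contribution of the isolated $K_{n'}$ copies, the two $(n'-1)$-terms combine as $\big(|\partial(G)|+(n-|\partial(G)|)\big)(n'-1)=n(n'-1)$, which yields exactly $\dim_s(G\circ K_{n'})=n(n'-1)+\dim_s(G)$.

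There is no deep obstacle in this argument, as all the hard structural work is already carried by Proposition~\ref{SR G and complete}. The one point genuinely requiring care is tracking the orders of the factors when applying Theorem~\ref{lem lexicographic coveringNum}, because Geller's formula is not symmetric in its two arguments; one must verify that $G_{SR}$ contributes order $|\partial(G)|$ and $K_{n'}$ contributes order $n'$, so that the coefficient multiplying $\alpha(K_{n'})$ is $|\partial(G)|$ and the coefficient multiplying $\alpha(G_{SR})$ is $n'$. Once these bookkeeping details are set correctly, the $|\partial(G)|$ terms cancel and the clean closed formula emerges.
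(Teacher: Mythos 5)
Your proposal is correct and follows exactly the same route as the paper: Theorem~\ref{th oellermann} plus Proposition~\ref{SR G and complete}, additivity of $\alpha$ over the disjoint components, Geller's formula applied to $G_{SR}\circ K_{n'}$ with $\alpha(K_{n'})=n'-1$, and the final cancellation of the $|\partial(G)|$ terms. Your attention to the non-commutativity of the lexicographic product in Theorem~\ref{lem lexicographic coveringNum} is well placed, but there is nothing to add beyond what the paper already does.
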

\begin{proof}
From Theorem \ref{th oellermann} and Proposition \ref{SR G and complete} we have,
$$\dim_s(G\circ K_{n'})=\alpha(G_{SR}\circ K_{n'}) + (n-|\partial(G)|)(n'-1)$$
and, by using Theorem \ref{lem lexicographic coveringNum} and again Theorem \ref{th oellermann} we obtain that
\begin{align*}
\dim_s(G\circ K_{n'})&=|\partial(G)| (n'-1) +n'\alpha(G_{SR})-\alpha(G_{SR})(n'-1)+ (n-|\partial(G)|) (n'-1)\\
&=n(n'-1)+\dim_s(G).
\end{align*}
\end{proof}

We have studied the case in which the second factor in the lexicographic product is a complete graph. Since this product is not commutative, it remains to study the case in which the first factor is a complete graph, which we do at next.

\begin{proposition}\label{SR-Completegraph}
Let $n\ge 2$ be an integer and let $H$ be a graph of order $n'\ge 2$.
If $H$ has maximum degree $\Delta(H)\le n'-2$, then
$$(K_n\circ H)_{SR}\cong  \bigcup_{i=1}^n H^*. $$
\end{proposition}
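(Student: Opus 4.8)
The plan is to determine all pairs of mutually maximally distant (MMD) vertices in $K_n\circ H$ by splitting into two cases according to whether the two vertices lie in the same fiber $\{a\}\times V(H)$ or in different fibers, and then to read off the strong resolving graph from this description. The key observation is that, since $K_n$ is complete, every pair of distinct vertices $a,b\in V(K_n)$ satisfies $N_{K_n}[a]=V(K_n)=N_{K_n}[b]$ and is therefore a pair of true twins; in particular, $K_n$ has no pair of distinct vertices that fail to be true twins, so among the cross-fiber lemmas only Lemma \ref{lemmaTrueTwins} is relevant here.

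First I would handle the within-fiber pairs. For fixed $a\in V(K_n)$ and distinct $x,y\in V(H)$, Lemma \ref{lem mmd in copy H} gives that $(a,x)$ and $(a,y)$ are MMD in $K_n\circ H$ if and only if $x$ and $y$ are adjacent in $H^*$. Hence the subgraph of $(K_n\circ H)_{SR}$ supported on $\{a\}\times V(H)$ is a copy of $H^*$, with the isolated vertices of $H^*$ deleted if any exist. Next I would treat the cross-fiber pairs. For distinct $a,b\in V(K_n)$, which are true twins as noted, Lemma \ref{lemmaTrueTwins} says that $(a,x)$ and $(b,y)$ are MMD in $K_n\circ H$ if and only if both $x$ and $y$ have degree $n'-1$ in $H$. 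The hypothesis $\Delta(H)\le n'-2$ rules this out, so no two vertices lying in different fibers are MMD. Consequently there are no edges of $(K_n\circ H)_{SR}$ joining distinct fibers, and the strong resolving graph decomposes as the disjoint union of the $n$ within-fiber graphs described above.

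The point that requires care, and the only place where the full strength of the hypothesis is used, is to verify that no isolated vertices are deleted, so that each fiber contributes a full copy of $H^*$ rather than of $H^*_-$. A vertex $x$ is isolated in $H^*$ precisely when every other vertex is at distance exactly $1$ from $x$ and is not a true twin of $x$; in particular this forces $\deg_H(x)=n'-1$. Since $\Delta(H)\le n'-2$, every vertex of $H$ has a non-neighbour, hence a vertex at distance at least $2$ (adjacent to it in $H^*$), so $H^*$ has no isolated vertices and $H^*=H^*_-$.

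Combining these facts finishes the argument: since $H^*$ has no isolated vertices, each $(a,x)$ is MMD to some $(a,y)$ in its own fiber, so the boundary $\partial(K_n\circ H)$ is all of $V(K_n)\times V(H)$; the within-fiber adjacencies reproduce $H^*$ exactly, and there are no cross-fiber edges. Therefore $(K_n\circ H)_{SR}\cong\bigcup_{i=1}^n H^*$, as claimed. I expect the isolated-vertex verification to be the main (and essentially only) obstacle, since the rest is a direct application of Lemmas \ref{lem mmd in copy H} and \ref{lemmaTrueTwins} together with the true-twin structure of $K_n$.
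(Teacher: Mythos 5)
Your proof is correct and follows essentially the same route as the paper's: Lemma \ref{lem mmd in copy H} for pairs within a fiber, Lemma \ref{lemmaTrueTwins} (via the fact that all pairs in $K_n$ are true twins) to exclude cross-fiber pairs, and the observation that $\Delta(H)\le n'-2$ forces $H^*$ to have no isolated vertices. The only difference is that you spell out the isolated-vertex verification, which the paper merely asserts.
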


\begin{proof}
We assume that $H$ has maximum degree $\Delta(H)\le n'-2$.  Notice that $H^*$ has no isolated vertices and, by Lemma \ref{lem mmd in copy H}, for any $a\in V(K_n)$, the subgraph $(K_n\circ H)_{SR}$ induced by $(\{a\}\times V(H))\cap \partial(K_n\circ H)$ is isomorphic to $H^*$.

Also, by Lemma \ref{lemmaTrueTwins}, for any different $a,b\in V(K_n)$ and any $x,y\in V(H)$, the vertices $(a,x)$ and $(b,y)$ are not mutually maximally distant in $K_n\circ H$. Therefore, the result follows.
\end{proof}

\begin{theorem}\label{ThCompletegraphTimesH}
Let $n\ge 2$ be an integer and let $H$ be a graph of order $n'\ge 2$ and  maximum degree $\Delta(H)\le n'-2$.
\begin{enumerate}[{\rm (i)}]
\item If $D(H)=2$, then
$\dim_s(K_n\circ H)=n\cdot \dim_s(H).$
\item If $D(H)> 2$, then
$\dim_s(K_n\circ H)=n\cdot \dim_s(K_1+H).$
\end{enumerate}
\end{theorem}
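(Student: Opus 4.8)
The plan is to reduce the whole statement to a single vertex-cover computation on a disjoint union of copies of $H^*$, exactly in the spirit of the proofs of Theorem \ref{ThGhasNotrueTwins} and of the complete-second-factor case treated just above. First I would invoke Theorem \ref{th oellermann} to write $\dim_s(K_n\circ H)=\alpha((K_n\circ H)_{SR})$, and then apply Proposition \ref{SR-Completegraph}, which is available precisely because $\Delta(H)\le n'-2$, to replace the strong resolving graph by $\bigcup_{i=1}^{n}H^*$. Since the vertex cover number is additive over the connected components of a graph (a minimum vertex cover of a disjoint union is the union of minimum vertex covers of the pieces), this gives at once $\dim_s(K_n\circ H)=n\,\alpha(H^*)$. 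The entire theorem then reduces to identifying $\alpha(H^*)$ with the correct strong metric dimension in each of the two diameter regimes, which is where Remark \ref{rem G*} enters.

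For part (i), where $D(H)=2$ (note that $D(H)=1$ is impossible, as it would force $H\cong K_{n'}$ and hence $\Delta(H)=n'-1$, contradicting $\Delta(H)\le n'-2$), I would use Remark \ref{rem G*}(ii) to get $H^*_-\cong H_{SR}$. The point that must be checked is that $H^*$ has \emph{no} isolated vertices, so that $H^*=H^*_-$: because every vertex of $H$ has degree at most $n'-2$, each vertex has a non-neighbour and is therefore adjacent in $H^*$ to at least one other vertex. Consequently $\alpha(H^*)=\alpha(H^*_-)=\alpha(H_{SR})=\dim_s(H)$, the last equality again by Theorem \ref{th oellermann}, and the claimed formula $\dim_s(K_n\circ H)=n\cdot\dim_s(H)$ follows.

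For part (ii), where $D(H)>2$, I would instead appeal to Remark \ref{rem G*}(i), valid precisely because $\Delta(H)\le n'-2$, to obtain $H^*\cong(K_1+H)_{SR}$; then $\alpha(H^*)=\alpha((K_1+H)_{SR})=\dim_s(K_1+H)$ and the second formula follows in the same way. The one genuine obstacle is the disconnected subcase: since $D(H)>2$ also covers $D(H)=\infty$, Remark \ref{rem G*}(i) as stated (for connected graphs) does not literally apply when $H$ is disconnected. I expect this to be the main point to address, and I would resolve it by verifying directly that $H^*\cong(K_1+H)_{SR}$ continues to hold for any $H$ with $\Delta(H)\le n'-2$: the apex of $K_1+H$ never lies in $\partial(K_1+H)$ (no vertex of $H$ is adjacent to all others), the graph $K_1+H$ has diameter $2$, and two vertices of $H$ are mutually maximally distant in $K_1+H$ exactly when they are non-adjacent in $H$ or are true twins in $H$ (using that adjacent vertices are mutually maximally distant only if they are true twins, as already noted in Lemma \ref{lem mmd}) which is precisely adjacency in $H^*$. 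Everything else in the argument is routine bookkeeping.
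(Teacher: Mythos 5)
Your proposal is correct and follows essentially the same route as the paper: apply Theorem \ref{th oellermann} and Proposition \ref{SR-Completegraph} to get $\dim_s(K_n\circ H)=n\,\alpha(H^*)$, then identify $\alpha(H^*)$ with $\alpha(H_{SR})$ or $\alpha((K_1+H)_{SR})$ according to the diameter and apply Theorem \ref{th oellermann} once more. You merely make explicit some details the paper leaves implicit (that $H^*$ has no isolated vertices, and that $H^*\cong(K_1+H)_{SR}$ persists when $H$ is disconnected), which are correct and harmless additions.
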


\begin{proof}
By Theorems \ref{th oellermann} and \ref{SR-Completegraph} we have,
$\dim_s(K_n\circ H)=n\cdot\alpha(H^*)$.
Hence, if $D(H)=2$, then
$\dim_s(K_n\circ H)=n \cdot\alpha(H_{SR})$
and if $D(H)> 2$, then
$\dim_s(K_n\circ H)=n \cdot\alpha((K_1+H)_{SR})$.
Therefore, by Theorem \ref{th oellermann} we conclude the proof.
\end{proof}

For the particular case of empty graphs $H=N_{n'}=(K_{n'})^c$, Theorem \ref{ThCompletegraphTimesH} leads to the next corollary, which is straightforward because $ K_n\circ N_{n'} \cong K_{n',n',...,n'}$, is a complete $n$-partite graph, and so $(K_n\circ N_{n'})_{SR}\cong \bigcup_{i=1}^n K_{n'}$.

\begin{corollary}\label{KnNn}
For any integers $n,n'\ge 2$,
$\dim_s(K_n\circ N_{n'})=n(n'-1).$
\end{corollary}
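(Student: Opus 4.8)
The final statement is Corollary \ref{KnNn}, which asserts that $\dim_s(K_n\circ N_{n'})=n(n'-1)$ for integers $n,n'\ge 2$. The plan is to derive this directly from Theorem \ref{ThCompletegraphTimesH}, for which I must first verify that the hypotheses of that theorem apply to $H=N_{n'}$, and then decide which of the two cases (i) or (ii) governs.

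First I would check the hypotheses on $H=N_{n'}$. The empty graph on $n'$ vertices has maximum degree $\Delta(N_{n'})=0$, and since $n'\ge 2$ we have $0\le n'-2$, so the degree condition $\Delta(H)\le n'-2$ of Theorem \ref{ThCompletegraphTimesH} is satisfied. Next I must determine the diameter of $N_{n'}$: since $N_{n'}$ is totally disconnected, any two of its (at least two) vertices lie in different components, so by the convention stated in the introduction $D(N_{n'})=\infty>2$. Therefore case (ii) of Theorem \ref{ThCompletegraphTimesH} applies, giving $\dim_s(K_n\circ N_{n'})=n\cdot\dim_s(K_1+N_{n'})$. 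Now $K_1+N_{n'}$ is the star $K_{1,n'}$, a tree with $n'$ leaves; by the known value in item (2) of the worked examples list, $\dim_s$ of a tree equals its number of leaves minus one, so $\dim_s(K_1+N_{n'})=n'-1$. Substituting yields $\dim_s(K_n\circ N_{n'})=n(n'-1)$, as claimed.

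Alternatively — and this is the route the excerpt itself flags as ``straightforward'' — I would argue geometrically via the structure of the product. The graph $K_n\circ N_{n'}$ is isomorphic to the complete $n$-partite graph $K_{n',n',\dots,n'}$, because adjacency in the product requires either adjacency in $K_n$ (which holds for all distinct first coordinates) or equal first coordinate together with adjacency in $N_{n'}$ (which never holds, as $N_{n'}$ has no edges); thus two product vertices are adjacent precisely when their $K_n$-coordinates differ. Invoking item (1) of the worked examples, the strong resolving graph of a complete $n$-partite graph $K_{p_1,\dots,p_n}$ with each part size at least $2$ is $\bigcup_{i=1}^n K_{p_i}$, so here $(K_n\circ N_{n'})_{SR}\cong\bigcup_{i=1}^n K_{n'}$. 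Then Theorem \ref{th oellermann} reduces the computation to $\alpha\left(\bigcup_{i=1}^n K_{n'}\right)$, and since the vertex cover number is additive over disjoint components with $\alpha(K_{n'})=n'-1$, I obtain $\dim_s(K_n\circ N_{n'})=n(n'-1)$.

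There is essentially no hard obstacle here, since the corollary is a direct specialization; the only point requiring a moment's care is recognizing that $D(N_{n'})=\infty>2$ places us in case (ii) rather than (i), and correctly identifying $K_1+N_{n'}$ as the star with $n'$ leaves so that the tree formula delivers $n'-1$. Both derivations agree, and I would present the second (via the complete multipartite identification) as the cleaner self-contained argument while noting that it is equally a consequence of Theorem \ref{ThCompletegraphTimesH}.
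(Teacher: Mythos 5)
Your proposal is correct and follows essentially the same route as the paper, which likewise justifies the corollary both as a specialization of Theorem \ref{ThCompletegraphTimesH} and, more directly, via the identification $K_n\circ N_{n'}\cong K_{n',n',\dots,n'}$ with $(K_n\circ N_{n'})_{SR}\cong\bigcup_{i=1}^n K_{n'}$ and Theorem \ref{th oellermann}. The only detail you add beyond the paper's one-line justification is the explicit check that $D(N_{n'})=\infty>2$ and that $K_1+N_{n'}\cong K_{1,n'}$ has strong metric dimension $n'-1$, both of which are correct.
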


We define the {\em TF-boundary} of a  non-complete graph $G=(V,E)$ as a set $\partial_{TF}(G) \subseteq \partial(G)$, where $x\in \partial_{TF}(G)$ whenever there exists $y\in \partial (G)$, such that $x$ and $y$ are mutually maximally distant in $G$ and $N_G[x]\ne N_G[y]$ (which means that $x,y$ are not true twins).
The \emph{strong resolving TF-graph} of $G$ is a graph $G_{SRS}$  with vertex set $V(G_{SRS}) = \partial_{TF}(G)$, where two vertices $u,v$ are adjacent in $G_{SRS}$ if and only if $u$ and $v$ are mutually maximally distant in $G$ and $N_G[x]\ne N_G[y]$. Since the strong resolving TF-graph is a subgraph of the strong resolving graph, an instance of the problem of transforming a graph into its strong resolving TF-graph forms part of the general problem of transforming a graph into its strong resolving graph. From \cite{Oellermann2007}, it is known that this general transformation is polynomial. Thus, the problem of  transforming a graph into its strong resolving TF-graph is also polynomial.

 An interesting example of a strong resolving TF-graph is obtained from the corona graph $G\odot K_{n'}$, $n'\ge 2$, where $G$ has order $n\ge 2$. Notice that any two different vertices belonging to any two copies of the complete graph $K_{n'}$ are mutually maximally distant, but if they are in the same copy, then they are also true twins. Thus, in this case $\partial_{TF}(G\odot K_{n'})=\partial(G\odot K_{n'})$, while we have have that $(G\odot K_{n'})_{SR}\cong K_{nn'}$ and $(G\odot K_{n'})_{SRS}$ is isomorphic to a complete $n$-partite graph $K_{n',n',...,n'}$.

\begin{proposition}\label{SR-MaxDgn'-2}
Let $G$ be a connected non-complete   graph of order $n\ge 2$ and let $H$ be a graph of order $n'\ge 2$.
If $H$ has maximum degree $\Delta(H)\le n'-2$, then
$$(G\circ H)_{SR}\cong (G_{SRS}\circ H^*) \cup \bigcup_{i=1}^{n-|\partial_{TF}(G)|} H^*. $$
\end{proposition}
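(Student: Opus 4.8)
The plan is to mirror the structure of the proof of Proposition~\ref{SR-graph-twins-free}, but now accounting for the fact that $G$ may contain true twin vertices. The key combinatorial point is that when $G$ has true twins, Lemma~\ref{lemmaTrueTwins} tells us that two vertices $(a,x),(b,y)$ with $a,b$ true twins in $G$ are mutually maximally distant in $G\circ H$ if and only if both $x$ and $y$ have degree $n'-1$ in $H$; since we assume $\Delta(H)\le n'-2$, \emph{no such vertex of degree $n'-1$ exists}, so no pair coming from true-twin columns of $G$ contributes an edge to $(G\circ H)_{SR}$. This is exactly why the roles of $\partial(G)$ and $G_{SR}$ in the twin-free case get replaced here by $\partial_{TF}(G)$ and $G_{SRS}$: the TF-boundary and strong resolving TF-graph were defined precisely to discard the mutually-maximally-distant pairs that are true twins.

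First I would classify the mutually maximally distant pairs of $G\circ H$ into the same types as before, using Lemmas~\ref{lem mmd}, \ref{lemmaTrueTwins}, and \ref{lem mmd in copy H}. For a fixed $a\in V(G)$, the pairs of the form $(a,x),(a,y)$ are governed by Lemma~\ref{lem mmd in copy H}: they are mutually maximally distant iff $x,y$ are adjacent in $H^*$, so each column $\{a\}\times V(H)$ induces a copy of $H^*$ inside $(G\circ H)_{SR}$ (and since $\Delta(H)\le n'-2$ forces $H^*$ to have no isolated vertices, this is a full copy of $H^*$, not $H^*_-$). For pairs $(a,x),(b,y)$ with $a\ne b$, I split into two cases: if $a,b$ are \emph{not} true twins, Lemma~\ref{lem mmd} says such a pair is mutually maximally distant iff $a,b$ are mutually maximally distant in $G$; if $a,b$ \emph{are} true twins, Lemma~\ref{lemmaTrueTwins} together with $\Delta(H)\le n'-2$ says no such pair is mutually maximally distant.

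Next I would identify which columns survive into which summand. A vertex $a\in V(G)$ lies in $\partial_{TF}(G)$ precisely when it is mutually maximally distant (in $G$) with some \emph{non-true-twin} partner $b$; for such $a$ and $b$, the cross edges between columns $\{a\}\times V(H)$ and $\{b\}\times V(H)$ realize the join pattern of the lexicographic product, so the subgraph induced by $(\partial_{TF}(G)\times V(H))\cap\partial(G\circ H)$ is isomorphic to $G_{SRS}\circ H^*$. For each of the remaining $n-|\partial_{TF}(G)|$ columns (those indexed by $a\notin\partial_{TF}(G)$), the column still induces a copy of $H^*$ internally, but by the case analysis above it has no edges connecting it to any other column: any potential cross edge would require $a$ to be mutually maximally distant with a non-true-twin $b$ (excluded since $a\notin\partial_{TF}(G)$) or with a true twin (excluded by $\Delta(H)\le n'-2$). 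These isolated copies give the $\bigcup_{i=1}^{n-|\partial_{TF}(G)|}H^*$ term.

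The main obstacle, and the place demanding the most care, is verifying the absence of cross edges between the two parts and, in particular, confirming that a column indexed by $a\notin\partial_{TF}(G)$ is genuinely disconnected from the $G_{SRS}\circ H^*$ block. One must check every way a cross edge could arise: the non-true-twin case is closed by Lemma~\ref{lem mmd} (which ties it to $\partial(G)$, hence to $G_{SRS}$ through the TF-refinement), while the true-twin case is the subtle one, closed only by invoking $\Delta(H)\le n'-2$ in Lemma~\ref{lemmaTrueTwins}. I would also take care that every vertex in a surviving column actually lies in $\partial(G\circ H)$, i.e.\ that $H^*$ really is the induced subgraph on the intersection with the boundary rather than a proper subgraph; this again follows from $\Delta(H)\le n'-2$ ensuring $H^*$ has no isolated vertices, so that $H^*_-=H^*$ and every column vertex is incident to some strong-resolving-graph edge. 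Once these disconnection and boundary-membership checks are assembled, the claimed isomorphism follows exactly as in Proposition~\ref{SR-graph-twins-free}.
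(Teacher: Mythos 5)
Your proposal is correct and follows essentially the same route as the paper's proof: the same column decomposition via Lemmas \ref{lem mmd}, \ref{lemmaTrueTwins} and \ref{lem mmd in copy H}, the same use of $\Delta(H)\le n'-2$ both to kill the true-twin cross edges (whence $\partial_{TF}(G)$ and $G_{SRS}$ replace $\partial(G)$ and $G_{SR}$) and to guarantee that $H^*$ has no isolated vertices so each column contributes a full copy of $H^*$. The paper's version is merely terser; no substantive difference.
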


\begin{proof}
We assume that $H$ has maximum degree $\Delta(H)\le n'-2$.  Notice that $H^*$ has no isolated vertices and, by Lemma  \ref{lem mmd in copy H}, for any $a\in V(G)$, the subgraph $(G\circ H)_{SR}$ induced by $(\{a\}\times V(H))\cap \partial(G\circ H)$ is isomorphic to $H^*$.

Also, by Lemma \ref{lemmaTrueTwins}, if two different vertices $a,b$ are true twins in $G$ and $x,y\in V(H)$, then $(a,x)$ and $(b,y)$ are not mutually maximally distant in $G\circ H$. So, from Lemmas \ref{lem mmd} and \ref{lem mmd in copy H} we deduce that the subgraph of $(G\circ H)_{SR}$ induced by $(\partial_{TF}(G)\times V(H))\cap \partial (G\circ H)$ is isomorphic to $G_{SRS}\circ H^*$. Moreover, for $a\not\in \partial_{TF} (G)$ and $b\in \partial_{TF} (G)$ there are no edges of $(G\circ H)_{SR}$ connecting vertices belonging to $\{a\}\times V(H)$ with vertices belonging to $\{b\}\times V(H)$. Therefore,
the result follows.
\end{proof}

Figure \ref{max degree} shows the graph $(K_1+(K_1\cup K_2))\circ P_4$ and its strong resolving graph. Notice that $(P_4)^*\cong P_4$ and $(K_1+(K_1\cup K_2))_{SRS} \cong P_3$. So, $((K_1+(K_1\cup K_2))\circ P_4)_{SR}\cong (P_3\circ P_4)\cup P_4$.

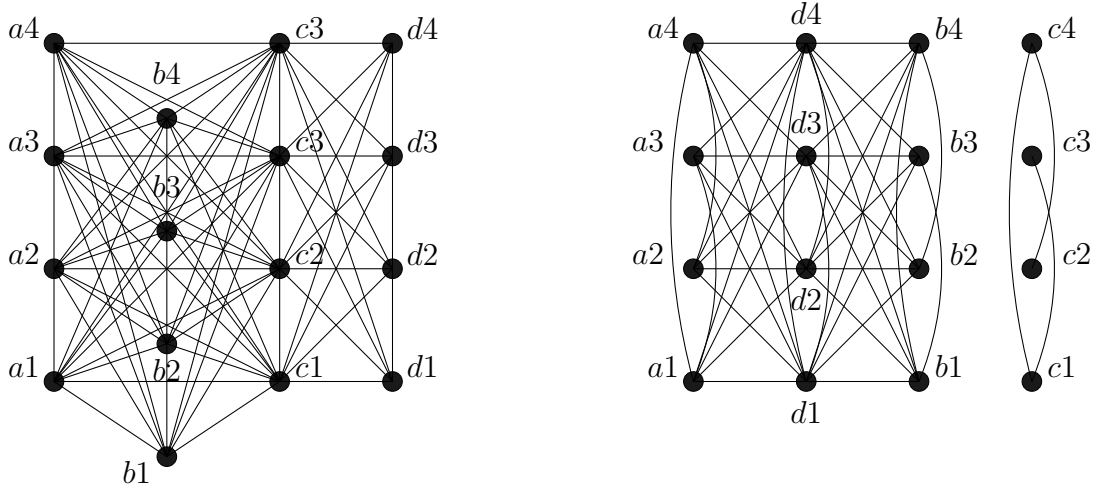
\begin{figure}[h]
\centering
\begin{tikzpicture}
\draw(0,0)--(0,4.5);
\draw(1.5,-1)--(1.5,3.5);
\draw(3,0)--(3,4.5);
\draw(4.5,0)--(4.5,4.5);
\draw(0,0)--(4.5,0);
\draw(0,0)--(1.5,-1);
\draw(1.5,-1)--(3,0);
\draw(0,1.5)--(4.5,1.5);
\draw(0,1.5)--(1.5,0.5);
\draw(1.5,0.5)--(3,1.5);
\draw(0,3)--(4.5,3);
\draw(0,3)--(1.5,2);
\draw(1.5,2)--(3,3);
\draw(0,4.5)--(4.5,4.5);
\draw(0,4.5)--(1.5,3.5);
\draw(1.5,3.5)--(3,4.5);

\draw(0,0)--(1.5,0.5);
\draw(0,0)--(1.5,2);
\draw(0,0)--(1.5,3.5);

\draw(0,1.5)--(1.5,-1);
\draw(0,1.5)--(1.5,2);
\draw(0,1.5)--(1.5,3.5);

\draw(0,3)--(1.5,-1);
\draw(0,3)--(1.5,0.5);
\draw(0,3)--(1.5,3.5);

\draw(0,4.5)--(1.5,-1);
\draw(0,4.5)--(1.5,0.5);
\draw(0,4.5)--(1.5,2);

\draw(0,0)--(3,1.5);
\draw(0,0)--(3,3);
\draw(0,0)--(3,4.5);

\draw(0,1.5)--(3,0);
\draw(0,1.5)--(3,3);
\draw(0,1.5)--(3,4.5);

\draw(0,3)--(3,0);
\draw(0,3)--(3,1.5);
\draw(0,3)--(3,4.5);

\draw(0,4.5)--(3,0);
\draw(0,4.5)--(3,1.5);
\draw(0,4.5)--(3,3);

\draw(1.5,-1)--(3,1.5);
\draw(1.5,-1)--(3,3);
\draw(1.5,-1)--(3,4.5);

\draw(1.5,0.5)--(3,0);
\draw(1.5,0.5)--(3,3);
\draw(1.5,0.5)--(3,4.5);

\draw(1.5,2)--(3,0);
\draw(1.5,2)--(3,1.5);
\draw(1.5,2)--(3,4.5);

\draw(1.5,3.5)--(3,0);
\draw(1.5,3.5)--(3,1.5);
\draw(1.5,3.5)--(3,3);

\draw(3,0)--(4.5,1.5);
\draw(3,0)--(4.5,3);
\draw(3,0)--(4.5,4.5);

\draw(3,1.5)--(4.5,0);
\draw(3,1.5)--(4.5,3);
\draw(3,1.5)--(4.5,4.5);

\draw(3,3)--(4.5,0);
\draw(3,3)--(4.5,1.5);
\draw(3,3)--(4.5,4.5);

\draw(3,4.5)--(4.5,0);
\draw(3,4.5)--(4.5,1.5);
\draw(3,4.5)--(4.5,3);

\filldraw[fill opacity=0.9,fill=black]  (0,0) circle (0.13cm);
\filldraw[fill opacity=0.9,fill=black]  (0,1.5) circle (0.13cm);
\filldraw[fill opacity=0.9,fill=black]  (0,3) circle (0.13cm);
\filldraw[fill opacity=0.9,fill=black]  (0,4.5) circle (0.13cm);
\filldraw[fill opacity=0.9,fill=black]  (1.5,-1) circle (0.13cm);
\filldraw[fill opacity=0.9,fill=black]  (1.5,0.5) circle (0.13cm);
\filldraw[fill opacity=0.9,fill=black]  (1.5,2) circle (0.13cm);
\filldraw[fill opacity=0.9,fill=black]  (1.5,3.5) circle (0.13cm);
\filldraw[fill opacity=0.9,fill=black]  (3,0) circle (0.13cm);
\filldraw[fill opacity=0.9,fill=black]  (3,1.5) circle (0.13cm);
\filldraw[fill opacity=0.9,fill=black]  (3,3) circle (0.13cm);
\filldraw[fill opacity=0.9,fill=black]  (3,4.5) circle (0.13cm);
\filldraw[fill opacity=0.9,fill=black]  (4.5,0) circle (0.13cm);
\filldraw[fill opacity=0.9,fill=black]  (4.5,1.5) circle (0.13cm);
\filldraw[fill opacity=0.9,fill=black]  (4.5,3) circle (0.13cm);
\filldraw[fill opacity=0.9,fill=black]  (4.5,4.5) circle (0.13cm);

\node at (-0.4,0.2) {$a1$ };
\node at (-0.4,1.7) {$a2$ };
\node at (-0.4,3.2) {$a3$ };
\node at (-0.4,4.7) {$a4$ };
\node at (1.1,-1.2) {$b1$ };
\node at (1.5,0.15) {$b2$ };
\node at (1.5,2.6) {$b3$ };
\node at (1.5,4.1) {$b4$ };
\node at (3.4,0.2) {$c1$ };
\node at (3.4,1.7) {$c2$ };
\node at (3.4,3.2) {$c3$ };
\node at (3.4,4.7) {$c3$ };
\node at (4.9,0.2) {$d1$ };
\node at (4.9,1.7) {$d2$ };
\node at (4.9,3.2) {$d3$ };
\node at (4.9,4.7) {$d4$ };

\draw(8.5,0)--(11.5,0);
\draw(8.5,1.5)--(11.5,1.5);
\draw(8.5,3)--(11.5,3);
\draw(8.5,4.5)--(11.5,4.5);

\draw(8.5,0)--(10,1.5);
\draw(8.5,0)--(10,3);
\draw(8.5,0)--(10,4.5);

\draw(8.5,1.5)--(10,0);
\draw(8.5,1.5)--(10,3);
\draw(8.5,1.5)--(10,4.5);

\draw(8.5,3)--(10,0);
\draw(8.5,3)--(10,1.5);
\draw(8.5,3)--(10,4.5);

\draw(8.5,4.5)--(10,0);
\draw(8.5,4.5)--(10,1.5);
\draw(8.5,4.5)--(10,3);

\draw(11.5,0)--(10,1.5);
\draw(11.5,0)--(10,3);
\draw(11.5,0)--(10,4.5);

\draw(11.5,1.5)--(10,0);
\draw(11.5,1.5)--(10,3);
\draw(11.5,1.5)--(10,4.5);

\draw(11.5,3)--(10,0);
\draw(11.5,3)--(10,1.5);
\draw(11.5,3)--(10,4.5);

\draw(11.5,4.5)--(10,0);
\draw(11.5,4.5)--(10,1.5);
\draw(11.5,4.5)--(10,3);


\filldraw[fill opacity=0.9,fill=black]  (8.5,0) circle (0.13cm);
\filldraw[fill opacity=0.9,fill=black]  (8.5,1.5) circle (0.13cm);
\filldraw[fill opacity=0.9,fill=black]  (8.5,3) circle (0.13cm);
\filldraw[fill opacity=0.9,fill=black]  (8.5,4.5) circle (0.13cm);
\filldraw[fill opacity=0.9,fill=black]  (10,0) circle (0.13cm);
\filldraw[fill opacity=0.9,fill=black]  (10,1.5) circle (0.13cm);
\filldraw[fill opacity=0.9,fill=black]  (10,3) circle (0.13cm);
\filldraw[fill opacity=0.9,fill=black]  (10,4.5) circle (0.13cm);
\filldraw[fill opacity=0.9,fill=black]  (11.5,0) circle (0.13cm);
\filldraw[fill opacity=0.9,fill=black]  (11.5,1.5) circle (0.13cm);
\filldraw[fill opacity=0.9,fill=black]  (11.5,3) circle (0.13cm);
\filldraw[fill opacity=0.9,fill=black]  (11.5,4.5) circle (0.13cm);
\filldraw[fill opacity=0.9,fill=black]  (13,0) circle (0.13cm);
\filldraw[fill opacity=0.9,fill=black]  (13,1.5) circle (0.13cm);
\filldraw[fill opacity=0.9,fill=black]  (13,3) circle (0.13cm);
\filldraw[fill opacity=0.9,fill=black]  (13,4.5) circle (0.13cm);

\draw (8.5,0) .. controls (8.1,1.5) and (8.1,3) .. (8.5,4.5);
\draw (8.5,0) .. controls (8.9,1) and (8.9,2) .. (8.5,3);
\draw (8.5,1.5) .. controls (8.9,2.5) and (8.9,3.5) .. (8.5,4.5);
\draw (10,0) .. controls (9.6,1.5) and (9.6,3) .. (10,4.5);
\draw (10,0) .. controls (10.4,1) and (10.4,2) .. (10,3);
\draw (10,1.5) .. controls (10.4,2.5) and (10.4,3.5) .. (10,4.5);
\draw (11.5,0) .. controls (11.1,1.5) and (11.1,3) .. (11.5,4.5);
\draw (11.5,0) .. controls (11.9,1) and (11.9,2) .. (11.5,3);
\draw (11.5,1.5) .. controls (11.9,2.5) and (11.9,3.5) .. (11.5,4.5);
\draw (13,0) .. controls (12.6,1.5) and (12.6,3) .. (13,4.5);
\draw (13,0) .. controls (13.4,1) and (13.4,2) .. (13,3);
\draw (13,1.5) .. controls (13.4,2.5) and (13.4,3.5) .. (13,4.5);

\node at (8.1,0.2) {$a1$ };
\node at (7.9,1.7) {$a2$ };
\node at (7.9,3.2) {$a3$ };
\node at (8.1,4.7) {$a4$ };
\node at (10,-0.4) {$d1$ };
\node at (10,1.1) {$d2$ };
\node at (10,3.45) {$d3$ };
\node at (10,4.9) {$d4$ };
\node at (11.9,0.2) {$b1$ };
\node at (12.1,1.7) {$b2$ };
\node at (12.1,3.2) {$b3$ };
\node at (11.9,4.7) {$b4$ };
\node at (13.4,0.2) {$c1$ };
\node at (13.6,1.7) {$c2$ };
\node at (13.6,3.2) {$c3$ };
\node at (13.4,4.7) {$c4$ };
\end{tikzpicture}
\caption{The graph $(K_1+(K_1\cup K_2))\circ P_4$ and its strong resolving graph}
\label{max degree}
\end{figure}

\begin{theorem}\label{ThDegHlessn-1}
Let $G$ be a connected non-complete  graph of order $n\ge 2$ and let $H$ be a graph of order $n'\ge 2$ and maximum degree $\Delta(H)\le n'-2$.
\begin{enumerate}[{\rm (i)}]
\item
If $D(H)=2$, then
$\dim_s(G\circ H)=n\cdot \dim_s(H) + n'\cdot \alpha(G_{SRS})-\alpha(G_{SRS})\dim_s(H).$
\item If $D(H)> 2$, then
$\dim_s(G\circ H)=n\cdot \dim_s(K_1+H) + n'\cdot \alpha(G_{SRS})-\alpha(G_{SRS})\dim_s(K_1+H).$
\end{enumerate}
\end{theorem}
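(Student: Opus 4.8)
The plan is to mirror the proof of Theorem \ref{ThGhasNotrueTwins}, replacing the structural input Proposition \ref{SR-graph-twins-free} by Proposition \ref{SR-MaxDgn'-2}, which applies precisely because the hypothesis $\Delta(H)\le n'-2$ forces $H^*$ to have no isolated vertices (so $H^*_-=H^*$ throughout). First I would invoke Theorem \ref{th oellermann} together with Proposition \ref{SR-MaxDgn'-2}; since the vertex cover number of a disjoint union is additive, this gives
$$\dim_s(G\circ H)=\alpha(G_{SRS}\circ H^*)+(n-|\partial_{TF}(G)|)\,\alpha(H^*).$$

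Next I would apply Geller's formula (Theorem \ref{lem lexicographic coveringNum}) to the lexicographic product $G_{SRS}\circ H^*$, noting that the first factor $G_{SRS}$ has order $|\partial_{TF}(G)|$ while the second factor $H^*$ has order $n'$. This yields
$$\alpha(G_{SRS}\circ H^*)=|\partial_{TF}(G)|\,\alpha(H^*)+n'\alpha(G_{SRS})-\alpha(G_{SRS})\alpha(H^*),$$
and after substituting into the previous line and cancelling the two occurrences of $|\partial_{TF}(G)|\,\alpha(H^*)$ I obtain the identity
$$\dim_s(G\circ H)=n\,\alpha(H^*)+n'\alpha(G_{SRS})-\alpha(G_{SRS})\alpha(H^*).$$

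The final step is to evaluate $\alpha(H^*)$ using Remark \ref{rem G*}. In case (i), where $D(H)=2$ so that $D(H)\le 2$, part (ii) of the remark gives $H^*_-\cong H_{SR}$, and since $H^*=H^*_-$ we get $\alpha(H^*)=\alpha(H_{SR})=\dim_s(H)$ by Theorem \ref{th oellermann}; substituting produces formula (i). In case (ii), where $D(H)>2$, part (i) of the remark---whose hypothesis $\Delta(H)\le n'-2$ is exactly what we assume---gives $H^*\cong(K_1+H)_{SR}$, whence $\alpha(H^*)=\alpha((K_1+H)_{SR})=\dim_s(K_1+H)$, and formula (ii) follows.

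I expect essentially no serious obstacle: the theorem is a routine consequence of the already-established structural description of $(G\circ H)_{SR}$ in Proposition \ref{SR-MaxDgn'-2}. The only points demanding care are the bookkeeping of which part of Remark \ref{rem G*} applies in each diameter regime, and the observation that the single hypothesis $\Delta(H)\le n'-2$ does double duty---it guarantees that $H^*$ is free of isolated vertices (so that $H^*$, rather than $H^*_-$, appears in Proposition \ref{SR-MaxDgn'-2} and in the additive computation above) and it simultaneously licenses the application of Remark \ref{rem G*}(i) in the large-diameter case.
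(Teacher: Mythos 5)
Your proposal is correct and follows essentially the same route as the paper: apply Theorem \ref{th oellermann} with Proposition \ref{SR-MaxDgn'-2}, use Geller's formula on $G_{SRS}\circ H^*$, cancel the $|\partial_{TF}(G)|\,\alpha(H^*)$ terms, and then evaluate $\alpha(H^*)$ according to the diameter of $H$. If anything, your bookkeeping is slightly cleaner than the paper's, whose displayed equation writes $|\partial(G)|$ and $|\partial_{SR}(G)|$ where $|\partial_{TF}(G)|$ is clearly intended.
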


\begin{proof}
By Theorem \ref{th oellermann} and Proposition \ref{SR-MaxDgn'-2} we have,
$$\dim_s(G\circ H)=\alpha(G_{SRS}\circ H^*) + (n-|\partial_{SR}(G)|) \alpha (H^*)$$ and, by Theorem \ref{lem lexicographic coveringNum}, we have
\begin{equation}\label{DimMaxDgn'-2}
\dim_s(G\circ H)=|\partial(G)| \alpha(H^*) +n'\alpha(G_{SRS})-\alpha(G_{SRS})\alpha(H^*)+ (n-|\partial_{SR}(G)|) \alpha (H^*).
\end{equation}
Now, if $D(H)=2$, then  $\alpha(H^*)=\alpha(H_{SR})$ and, if $D(H)> 2$,
then $\alpha(H^*)=\alpha((K_1+H)_{SR})$. Hence, if $D(H)=2$, then
$$\dim_s(G\circ H)=n \alpha(H_{SR}) +n'\alpha(G_{SRS})-\alpha(G_{SRS})\alpha(H_{SR}),$$
and if $D(H)> 2$, then
$$\dim_s(G\circ H)=n \alpha((K_1+H)_{SR}) +n'\alpha(G_{SRS})-\alpha(G_{SRS})\alpha((K_1+H)_{SR}).$$
Therefore, by Theorem \ref{th oellermann} we conclude the proof.
\end{proof}

Now we consider the case of empty graphs $N_{n'}=(K_{n'})^c$.

\begin{corollary}\label{H=empty}
Let $G$ be a connected non-complete  graph of order $n\ge 2$ and let  $n'\ge 2$ be an integer.
Then
$$\dim_s(G\circ N_{n'})=n(n'-1)+\alpha(G_{SRS}).$$
In particular, if $H$ has no true twin vertices, then $$\dim_s(G\circ N_{n'})=n(n'-1)+\dim_s(G).$$
\end{corollary}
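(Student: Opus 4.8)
The plan is to obtain this directly from Theorem~\ref{ThDegHlessn-1} by specializing the second factor to $H=N_{n'}$ and then evaluating the two invariants that appear on the right-hand side. First I would check that $N_{n'}$ satisfies the hypotheses of Theorem~\ref{ThDegHlessn-1}: it has order $n'\ge 2$ and maximum degree $\Delta(N_{n'})=0\le n'-2$, and since $N_{n'}$ is disconnected we adopt the convention $D(N_{n'})=\infty>2$, so case~(ii) of that theorem applies. This yields
$$\dim_s(G\circ N_{n'})=n\cdot \dim_s(K_1+N_{n'})+n'\cdot\alpha(G_{SRS})-\alpha(G_{SRS})\dim_s(K_1+N_{n'}).$$

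The key computation is then the value $\dim_s(K_1+N_{n'})$. I would observe that $K_1+N_{n'}$ is precisely the star $K_{1,n'}$, that is, a tree with $n'$ leaves; by the tree formula recorded in item~(2) of the list of examples (any tree $T$ satisfies $\dim_s(T)=l(T)-1$), we obtain $\dim_s(K_1+N_{n'})=n'-1$. Substituting this into the displayed identity and collecting the two terms carrying $\alpha(G_{SRS})$ gives $n'\alpha(G_{SRS})-(n'-1)\alpha(G_{SRS})=\alpha(G_{SRS})$, whence the first claimed formula $\dim_s(G\circ N_{n'})=n(n'-1)+\alpha(G_{SRS})$.

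For the special case (where the hypothesis is to be read as: $G$ has no true twin vertices), I would argue that the strong resolving TF-graph then coincides with the ordinary strong resolving graph. Indeed, in the definition of $G_{SRS}$ one only discards those mutually maximally distant pairs $x,y$ that are true twins; if $G$ admits no true twins at all, the extra requirement $N_G[x]\ne N_G[y]$ holds automatically whenever $x$ and $y$ are mutually maximally distant, so $\partial_{TF}(G)=\partial(G)$ and $G_{SRS}\cong G_{SR}$. Consequently $\alpha(G_{SRS})=\alpha(G_{SR})=\dim_s(G)$ by Theorem~\ref{th oellermann}, and substitution into the first formula gives $\dim_s(G\circ N_{n'})=n(n'-1)+\dim_s(G)$. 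The argument is essentially mechanical once Theorem~\ref{ThDegHlessn-1} is available; the only points demanding a little care are the identification $K_1+N_{n'}\cong K_{1,n'}$ together with its strong metric dimension, and the observation that the true-twin exclusion defining $G_{SRS}$ becomes vacuous for twin-free $G$.
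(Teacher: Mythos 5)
Your proposal is correct and is essentially the derivation the paper intends: the corollary is obtained by specializing Theorem~\ref{ThDegHlessn-1}~(ii) to $H=N_{n'}$ (using $D(N_{n'})=\infty>2$ and $\dim_s(K_1+N_{n'})=\dim_s(K_{1,n'})=n'-1$), and then noting that for twin-free $G$ one has $G_{SRS}\cong G_{SR}$, hence $\alpha(G_{SRS})=\dim_s(G)$. You also correctly read the paper's ``if $H$ has no true twin vertices'' as a typo for $G$.
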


As we can expect, if $G$ has no true twin vertices and $H$  has maximum degree $\Delta(H)\le n'-2$, then both, Theorem \ref{ThGhasNotrueTwins} and Theorem \ref{ThDegHlessn-1}, lead to the same result.

\begin{theorem}
Let $G$ be a connected  graph of order $n\ge 2$ and let $H$ be a graph of order $n'\ge 2$ and maximum degree $\Delta(H)\le n'-2$.
Then the following assertions hold:

\begin{enumerate}[{\rm (i)}]
\item If $H$ has no true twin vertices, then  $$\dim_s(G\circ H)=(n-\alpha(G_{SRS}))(n'- \omega(H))+n'\alpha(G_{SRS}).$$
\item If neither $G$ nor  $H$  have true twin vertices, then  $$\dim_s(G\circ H)=(n-\dim_s(G))(n'- \omega(H))+n'\dim_s(G).$$
\end{enumerate}
\end{theorem}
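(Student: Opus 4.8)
The plan is to produce a single closed expression for $\dim_s(G\circ H)$ in terms of the parameter $\alpha(H^*)$, and then to rewrite $\alpha(H^*)$ purely in terms of $\omega(H)$ using the hypothesis that $H$ is twin-free. The decisive identity is the following: since $H$ has no true twin vertices, Remark \ref{rem G*}(iii) gives $H^*\cong H^c$, whence $\alpha(H^*)=\alpha(H^c)$; Gallai's theorem (Theorem \ref{th gallai}) applied to $H^c$ yields $\alpha(H^c)=n'-\beta(H^c)$; and because an independent set of $H^c$ is precisely a clique of $H$, we have $\beta(H^c)=\omega(H)$. Combining these,
$$\alpha(H^*)=n'-\omega(H).$$
Everything else is algebraic bookkeeping layered on top of the machinery already developed.

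First I would prove part (i), splitting on whether $G$ is complete. If $G$ is non-complete, I would start from Proposition \ref{SR-MaxDgn'-2} and apply Theorem \ref{lem lexicographic coveringNum} exactly as in the proof of Theorem \ref{ThDegHlessn-1}; the two occurrences of $|\partial_{TF}(G)|\,\alpha(H^*)$ cancel and leave the case-free expression
$$\dim_s(G\circ H)=n\,\alpha(H^*)+\alpha(G_{SRS})\bigl(n'-\alpha(H^*)\bigr).$$
Substituting $\alpha(H^*)=n'-\omega(H)$ gives $\dim_s(G\circ H)=n(n'-\omega(H))+\alpha(G_{SRS})\,\omega(H)$, and a one-line rearrangement shows this equals $(n-\alpha(G_{SRS}))(n'-\omega(H))+n'\alpha(G_{SRS})$. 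If instead $G=K_n$, then $\partial_{TF}(G)=\emptyset$ and I would invoke Theorem \ref{ThCompletegraphTimesH} to get $\dim_s(K_n\circ H)=n\,\alpha(H^*)=n(n'-\omega(H))$; this is exactly the target formula under the convention $\alpha(G_{SRS})=0$, so both subcases are subsumed in one statement.

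For part (ii) I would specialize part (i). When $G$ too is twin-free, the condition $N_G[x]\ne N_G[y]$ in the definition of the TF-boundary holds automatically, so $\partial_{TF}(G)=\partial(G)$ and $G_{SRS}=G_{SR}$; hence $\alpha(G_{SRS})=\alpha(G_{SR})=\dim_s(G)$ by Theorem \ref{th oellermann}. Replacing $\alpha(G_{SRS})$ by $\dim_s(G)$ in the formula of part (i) yields the claim.

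The step I would watch most carefully is the uniform treatment of $\alpha(H^*)$ across the two diameter regimes. One must remember that $\alpha(H^*)=\dim_s(H)$ when $D(H)=2$ (Remark \ref{rem G*}(ii)), whereas $\alpha(H^*)=\dim_s(K_1+H)$ when $D(H)>2$ (Remark \ref{rem G*}(i), which requires the standing hypothesis $\Delta(H)\le n'-2$); the point that makes the final formula clean is that in both regimes $\alpha(H^*)$ collapses to the same value $n'-\omega(H)$, so the clique-number formula never needs to split on the diameter of $H$. A secondary consistency check is the complete-graph boundary case, where one verifies that $\alpha(G_{SRS})=0$ indeed reconciles Theorem \ref{ThCompletegraphTimesH} with the general expression.
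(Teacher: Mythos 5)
Your proof is correct and follows essentially the same route as the paper: the key identity $\alpha(H^*)=\alpha(H^c)=n'-\beta(H^c)=n'-\omega(H)$ via Remark \ref{rem G*}(iii) and Gallai's theorem, substituted into the expression obtained from Proposition \ref{SR-MaxDgn'-2} and Theorem \ref{lem lexicographic coveringNum}, is exactly what the paper does. The only (harmless) divergences are that you treat the boundary case $G=K_n$ explicitly via Theorem \ref{ThCompletegraphTimesH} --- a point the paper glosses over, since equation (\ref{DimMaxDgn'-2}) is derived under the hypothesis that $G$ is non-complete --- and that you obtain (ii) by specializing (i) through $G_{SRS}=G_{SR}$, whereas the paper instead reuses equation (\ref{DimHttfree}); both yield the same formula.
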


\begin{proof}
First of all, notice that Theorem \ref{th gallai} leads to $\alpha(H^c)=n'-\beta(H^c)=n'-\omega(H)$. Also, from  $\Delta(H)\le n'-2$  we have $H^*=H^*_-$ and,  if  $H$  has no true twin vertices, then $H^*=H^c$. Hence, (\ref{DimMaxDgn'-2})  leads to (i).
Moreover, if  $G$  has no true twin vertices, then (\ref{DimHttfree}) leads to (ii).
\end{proof}

\section*{Conclusion and open problems}

We have studied the strong metric dimension of lexicographic product graphs $G\circ H$ in the following cases.
\begin{itemize}
\item $H$ is any non-trivial graph and $G$ has no true twins.
\item $G$ is any connected graph and $H$ is a non-trivial graph having maximum degree at most its order minus two.
\end{itemize}
In this sense, it remains to study the case in which $G$ is any connected graph and $H$ is a non-trivial graph having maximum degree equal to its order minus one, which we leave as an open problem.

On the other hand, it can be noticed the very important role which plays the strong resolving graph of a graph into computing its strong metric dimension (this fact can be also noted in the articles \cite{Kuziak2013c,Kuziak2013b,Rodriguez-Velazquez2013a}). According to this interesting usefulness of the strong resolving graph we propose as an open problem, to describe the strong resolving graph of other families of graphs. This problem was already mentioned (but not remarked) in the article \cite{Oellermann2007}, where was open the question of characterizing the class of all graphs having a strong resolving graph isomorphic to a bipartite graph. The motivation for this question is related to the fact that, in this case, the vertex cover number can be computed in polynomial time and, in concordance with Theorem \ref{th oellermann}, also the strong metric dimension. Moreover, is it another interesting application of the strong resolving graph?

\end{document}